\newtheorem{thm}{Theorem}
\newtheorem{lem}{Lemma}
\newtheorem{cor}{Corollary}
\theoremstyle{definition}
\newtheorem{defn}{Definition}
\newtheorem{rem}{Remark}
\newtheorem{ques}{Question}
\renewcommand{\Re}{\mathbb R}
\newcommand{\Sph}{\mathbb S}
\newcommand{\F}{\mathcal{F}}
\newcommand{\T}{\mathcal{T}}
\DeclareMathOperator{\inter}{int}
\DeclareMathOperator{\bd}{bd}
\DeclareMathOperator{\conv}{conv}
\begin{document}
\title[Decompositions of a polygon]{Decompositions of a polygon into centrally symmetric pieces}
\author[J. Frittmann and Z. L\'angi]{J\'ulia Frittmann and Zsolt L\'angi}
\address{J\'ulia Frittmann, Budapest University of Technology and Economics, Budapest, Egry J\'ozsef u. 1.,
Hungary, 1111}
\email{frittmannjuli@gmail.com}
\address{Zsolt L\'angi, Dept. of Geometry, Budapest University of
Technology and Economics, Budapest, Egry J\'ozsef u. 1., Hungary, 1111}
\email{zlangi@math.bme.hu}
\keywords{decomposition, dissection, tiling, centrally symmetric polygon, irreducible tiling, edge-to-edge tiling, planar map.}
\subjclass{52C20, 52B45, 52C45}
\thanks{Partially supported by the J\'anos Bolyai Research Scholarship of the Hungarian Academy of Sciences}

\begin{abstract}
In this paper we deal with edge-to-edge, irreducible decompositions of a centrally symmetric convex $(2k)$-gon into centrally symmetric convex pieces.
We prove an upper bound on the number of these decompositions for any value of $k$, and characterize them for octagons.
\end{abstract}

\maketitle

\section{Introduction}

Tiling the plane or a given region has been of interest to mathematicians and artists as well, throughout the history of mankind.
In a large number of problems, the region to tile is often a polygon in the Euclidean plane $\Re^2$.
Without completeness, we list a few of these problems that have appeared in the literature.

In 1970, Monsky \cite{M70} showed that a square cannot be dissected into an odd number of triangles of equal areas. In a series of papers this problem, and result, was generalized for other figures and dimensions (cf. \cite{S89}, \cite{K89}, \cite{KS90}).
In recent years, another problem: the enumeration and characterization of the number of different dissections of a regular polygon with noncrossing diagonals, seems also to have been in the focus of research (cf. \cite{R78}, \cite{L95}, \cite{PS00}, \cite{T06}, \cite{HHNO09} or \cite{BR14});
in particular, \.Zak \cite{Z10} investigated the problem of identifying topologically equivalent dissections of this type.
Brooks, Smith, Stone and Tutte \cite{BSST40} examined the problem of dissecting a rectangle into squares of different sizes.
In \cite{KKU00}, the authors consider the problem of finding the minimum number of cuts to dissect a regular $m$-gon into a regular $n$-gon of the same area.
For more information on related questions, the author is referred to \cite{GR95}.

In 1997, G.Horv\'ath introduced the notion of \emph{irreducible} dissections of a centrally symmetric polygon into centrally symmetric convex polygons. He gave an example showing that for a centrally symmetric hexagon, there are infinitely many combinatorial types of such dissections. On the other hand, he proved that, up to combinatorial equivalence, there are exactly six irreducible, \emph{edge-to-edge} decompositions of this region.

The aim of this paper is twofold. In the first part, we show that, up to combinatorial equivalence, there are only finitely many irreducible, edge-to-edge decompositions of a centrally symmetric $(2k)$-gon for every value of $k$, and prove an upper bound on their number. In the second part we characterize the combinatorial classes of these decompositions for centrally symmetric convex octagons.

We start with some formal definitions.

\begin{defn}\label{defn:decomposition}
Let $P$ be a convex polygon. A family $\T$ of mutually nonoverlapping convex polygons covering $P$ is called a \emph{tiling} or \emph{decomposition} of $P$.
A decomposition $\T$ of $P$ is called \emph{edge-to-edge}, if for every edge $E$ of any member $F_i$ of $\T$, either $E$ is in $\bd P$, or there is some
$F_j \in \T$, where $i \neq j$, such that $E$ is an edge of $F_j$.
A decomposition $\T$ is called \emph{irreducible}, if $| \T | > 1$, and for any subfamily $\T'$ of $\T$ with the property that $\bigcup_{F_i \in \T'} F_i$ is convex, we have $| \T' | = 1$ or $| \T' | = | \T |$.
\end{defn}

It was observed in \cite{GHA97} (cf. also \cite{A58}) that only a centrally symmetric polygon can be decomposed into centrally symmetric pieces. Note that the elements of an edge-to-edge tiling, together with their edges and vertices, form a CW-decomposition. If the face lattices of the CW-decompositions induced by two edge-to-edge tilings $\T_1$ and $\T_2$ of the centrally symmetric $(2k)$-gons $P_1, P_2$ are isomorphic, we say that $\T_1$ and $\T_2$ are combinatorially equivalent. The \emph{combinatorial class} of a decomposition $\T$ is the equivalence class of $\T$ defined by this equivalence relation. Note that the same combinatorial classes are represented among the tilings of any two centrally symmetric $(2k)$-gons.

Our two main results are the following.

\begin{thm}\label{thm:upperbound}
Let $k \geq 4$. Then the number of the combinatorial classes of the irreducible, edge-to-edge decompositions of a centrally symmetric $(2k)$-gon into centrally symmetric parts is at most $\frac{6N (4N+1)!}{N! (3N+3)!}$, where $N = \left\lfloor \frac{2k^3(2k-3)^2}{\left( 2-\sqrt{2} \right) \pi^2 } \right\rfloor$.
\end{thm}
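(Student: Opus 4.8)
The plan is to reduce the statement to a counting problem about planar maps. As the paragraph preceding the theorem observes, the combinatorial class of an edge-to-edge decomposition $\T$ of a centrally symmetric $(2k)$-gon $P$ is precisely the isomorphism class of the face lattice of the CW-decomposition it induces, i.e.\ of the planar map $G_{\T}$ whose vertices, edges and bounded faces are the vertices, edges and tiles of $\T$ (with the unbounded face adjoined). Hence it is enough to: (a) bound, in terms of $k$ only, the combinatorial size of $G_{\T}$ --- say its number of tiles, which by Euler's formula also controls its numbers of edges and vertices; and then (b) bound the number of isomorphism types of planar maps of that size. The constant $N$ in the statement is the bound produced in step~(a), and step~(b) will follow from the classical enumeration of planar triangulations together with some very lossy slack.

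Step~(a) is the geometric heart of the argument, and I expect it to be the chief obstacle; it is where all three hypotheses must be used in concert. Central symmetry forces the edges of $P$, and of every tile, to occur in parallel opposite pairs, so that only a controlled set of edge directions is in play --- at most $k$ arising from $P$, and each tile, being a centrally symmetric $2m$-gon, contributing $m$ of them. The edge-to-edge condition converts this into angle relations at the vertices: the tile-angles around an interior vertex sum to $2\pi$, around a vertex in the relative interior of an edge of $P$ to $\pi$, and around a vertex of $P$ to the angle of $P$ there. Irreducibility is what rules out arbitrarily fine subdivision: if two tiles meet along a full common edge and at each of its endpoints the two tile-angles sum to at most $\pi$, then their union is convex, contradicting irreducibility; iterating refinements of this observation excludes thin tiles and overcrowded vertices. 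Turning these qualitative facts into a quantitative lower estimate --- for the area of a tile, or for the angular ``charge'' a tile is forced to carry at its vertices --- in terms of $k$ caps the number of tiles by $N$; the explicit value $N=\left\lfloor 2k^{3}(2k-3)^{2}/\big((2-\sqrt 2)\pi^{2}\big)\right\rfloor$ should fall out of such an estimate, the factor $2-\sqrt 2=4\sin^{2}(\pi/8)$ being the signature of a chord-length bound for an angle of size $\pi/4$, and the polynomial $k^{3}(2k-3)^{2}$ collecting crude counts of directions, of edges per tile, and of the ways directions can be forced to interact. Essentially all of the real work --- and all of the extravagance of the final bound --- sits here.

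Given~(a), step~(b) is bookkeeping. A connected planar map with at most $N$ bounded faces has only boundedly many edges and vertices; inserting diagonals completes it to a triangulation without creating new vertices, so each of our combinatorial classes is recorded by a rooted planar triangulation with at most $N$ interior vertices, and since every isomorphism type is rooted in at least one way, the number of unrooted types is at most the number of rooted ones. By Tutte's census the number of rooted planar triangulations of a triangle with exactly $n$ interior vertices equals $\tfrac{2(4n+1)!}{(n+1)!\,(3n+2)!}$, a non-decreasing function of $n$, so the number with at most $N$ interior vertices is at most $N$ times the largest term; together with the identity
\[
N\cdot\frac{2(4N+1)!}{(N+1)!\,(3N+2)!}=\frac{6N\,(4N+1)!}{N!\,(3N+3)!}
\]
this yields the asserted upper bound. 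One should not expect the estimate to be anywhere near tight --- the goal is only finiteness with an explicit constant --- and the characterization of octagon decompositions in the second part of the paper will in fact show the truth is vastly smaller.
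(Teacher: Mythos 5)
Your step (a) is where the theorem actually lives, and you have not proved it: you state that a quantitative bound on the number of tiles ``should fall out'' of considerations about directions, angle sums and irreducibility, but no estimate is derived, and the specific constant $N$ is only reverse-engineered from its appearance in the statement. The paper's argument at this point is concrete and different in its target quantity: using the edge-class lemma (every edge of the complex is a translate of an edge on $\bd P$, and consecutive members of a class are opposite sides of one tile), it shows each unit side of $P$ is cut into at most $2k-3$ edges, hence every edge of the complex has length at least $\frac{1}{2k-3}$; since tile angles are multiples of $\frac{\pi}{k}$, consecutive edges of a class are at distance at least $\frac{\sin(\pi/k)}{2k-3}$, while opposite sides of the regular $(2k)$-gon are at distance $\cot\frac{\pi}{2k}$, so each class has at most $\frac{2k-3}{2\sin^2(\pi/2k)}$ members; with at most $2k-3$ classes per direction and $k$ directions, the number of \emph{edges} of the complex is at most $\frac{k(2k-3)^2}{2\sin^2(\pi/2k)}\le N$ (the factor $2-\sqrt2=4\sin^2\frac{\pi}{8}$ enters via $\sin x\ge x\sin\frac{\pi}{8}$, as you guessed). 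So $N$ bounds the number of edges, not the number of tiles; your proposal establishes neither.

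Step (b) is also flawed as written. Completing the map to a triangulation by inserting diagonals is not an injective encoding: many distinct maps complete to the same triangulation, so the number of triangulations does not bound the number of maps unless you additionally record which edges were inserted (which would multiply the bound by an exponential factor); moreover the claim that the resulting triangulation has at most $N$ interior vertices is unsupported, since your $N$ was meant to bound tiles and the interior vertices of the triangulation are the interior vertices of the original complex, a different quantity (your preliminary claim that the number of bounded faces controls edges and vertices via Euler's formula also needs a minimum-degree hypothesis, as trees show). The paper needs no such encoding: the $1$-skeleton of the complex is a loopless planar map, it can be rooted, and by the Walsh--Lehman formula the number of loopless rooted maps with $n$ edges is $\frac{6(4n+1)!}{n!(3n+3)!}$; summing over $n\le N$ and using that rooted classes dominate unrooted ones gives the stated bound directly. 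Your arithmetic identity relating Tutte's triangulation count to the stated formula is correct, but it does not repair the missing injectivity or the missing bound on the relevant size parameter.
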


\begin{thm}\label{thm:main}
There are $111$ combinatorial classes of irreducible, edge-to-edge decompositions of a centrally symmetric convex octagon $P$. Furthermore, any irreducible, edge-to-edge decomposition of $P$ is combinatorially equivalent to one of the $111$ decompositions shown in the Appendix.
\end{thm}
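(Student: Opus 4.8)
The plan is to carry out a careful enumeration, guided and bounded by Theorem~\ref{thm:upperbound}. Since $k=4$ gives a bound on $N$ that is astronomically large, the numerical bound of Theorem~\ref{thm:upperbound} is useless for a direct computer search; instead I would use the \emph{structural} facts that underlie that theorem — that the pieces are centrally symmetric convex polygons, that the tiling is edge-to-edge, and that irreducibility forbids any proper sub-union (other than a single tile) from being convex — to set up a constrained generation procedure. First I would record the elementary constraints on a single centrally symmetric convex piece sitting inside a centrally symmetric octagon: each such piece is a $(2m)$-gon for some $m\ge 2$, so it is either a parallelogram ($m=2$), a centrally symmetric hexagon ($m=3$), or a centrally symmetric octagon ($m=4$), and an octagonal piece can only be the whole of $P$, which is excluded. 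Hence every tile is a parallelogram or a centrally symmetric hexagon. I would also note the boundary constraint: the edges of tiles lying on $\bd P$ partition each of the $8$ edges of $P$ into sub-edges, and by edge-to-edgeness and central symmetry the induced partition of $\bd P$ is symmetric under the point reflection of $P$.

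Next I would set up the combinatorial skeleton. The CW-decomposition has some number $v$ of vertices, $e$ of edges and $f$ of faces (the tiles plus the outer face), related by Euler's formula $v-e+f=2$; combining this with the hexagon/parallelogram count and the valence bookkeeping at vertices gives strong linear restrictions. The key additional inputs are: (i) an upper bound on the number of tiles, which I expect to obtain by a direct area/angle argument specialized to the octagon (much sharper than the generic $N$ of Theorem~\ref{thm:upperbound}), say $|\T|\le C$ for an explicit small $C$; and (ii) the observation, following G.Horv\'ath's hexagon analysis, that irreducibility together with edge-to-edgeness severely limits how pieces can abut — in particular, two tiles sharing a full edge whose union is convex are forbidden, which rules out many local configurations and forces every "straight" cut across $P$ to be crossed transversally elsewhere. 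With $|\T|$ bounded, I would enumerate the possible boundary-partition patterns of $\bd P$ up to the dihedral symmetry of the regular octagon, and for each pattern generate all edge-to-edge fillings by centrally symmetric tiles using a backtracking search (in practice a computer enumeration), discarding at every stage any partial configuration that already contains a convex proper sub-union.

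After generating candidates I would reduce modulo combinatorial equivalence, i.e.\ isomorphism of face lattices, again together with the symmetries of $P$, and then verify irreducibility globally for each surviving candidate. The final count should come out to $111$, matching the pictures in the Appendix; conversely, I would check that each of the $111$ displayed decompositions is genuinely irreducible and edge-to-edge, so that the list is both complete and non-redundant. I expect the main obstacle to be the step of turning the essentially infinite generic search space into a finite, manageable one: the honest bound from Theorem~\ref{thm:upperbound} is far too large to search, so the crux is proving a sharp bound on $|\T|$ for octagons and, more delicately, showing that the continuum of possible edge-length parameters collapses — as in the hexagon case, where there are infinitely many \emph{metric} dissections but only finitely many \emph{combinatorial} ones — to finitely many combinatorial types for the edge-to-edge irreducible octagon decompositions. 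Establishing that collapse rigorously, rather than merely observing it on the generated examples, is where the real work lies; the remaining enumeration, equivalence reduction, and verification are then finite bookkeeping best delegated to a carefully audited computer search.
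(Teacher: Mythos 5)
Your proposal has a genuine gap on two levels. First, a concrete factual error: you assert that an octagonal tile ``can only be the whole of $P$,'' so every tile is a parallelogram or a centrally symmetric hexagon. This is false. A tile must have all its sides parallel to sides of $P$ (since every edge of the complex is a translate of a boundary edge via the edge classes of Lemma~\ref{lem:edgeclasses}), so it has at most eight sides, but a proper octagonal tile is perfectly possible: it is just a smaller centrally symmetric octagon with sides parallel to those of $P$, and such tiles genuinely occur in the classification (e.g.\ configurations VIII/3, VIII/6 and VIII/14 in the paper's proof, and the Appendix explicitly records octagon counts via the symbol $\#8$). An enumeration built on your restriction would silently omit a nontrivial portion of the $111$ classes, so the count could not come out right.

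Second, and more fundamentally, the crux you yourself identify --- proving that the continuum of metric dissections collapses to finitely many combinatorial types, and bounding the complexity of a tiling sharply enough to make the search finite --- is exactly the part you do not carry out. The paper resolves it with two specific tools that your sketch does not supply: (i) the edge-class structure of Lemma~\ref{lem:edgeclasses}, which shows every edge of the complex is a translate of a boundary edge and lets one normalize all parallel edges to equal length without changing the combinatorial type (this is the rigorous ``collapse''); and (ii) Lemma~\ref{lem:maxnumberofedges}, which for the octagon bounds the number of complex edges on each side of $P$ by five, yielding the finite list of side ``types'' $1$--$5$. The actual classification is then a hand case analysis organized by the type vector of $P$ (Cases I--IX), driven by local lemmas such as Lemma~\ref{lem:norightangle} and the configuration Lemma~\ref{lem:types}, not by a computer search over boundary patterns. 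Your plan, as written, is a program for a proof rather than a proof: without the normalization argument, the per-side bound, and the executed case analysis (or a verified computation replacing it), the statement that the answer is $111$ is not established.
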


In Sections~\ref{sec:upperbound} and \ref{sec:main}, we prove Theorems~\ref{thm:upperbound} and \ref{thm:main}, respectively. Finally, in Section~\ref{sec:remarks}
we collect additional remarks and propose some problems. For brevity, throughout this paper by a decomposition or tiling we mean an irreducible, edge-to-edge decomposition of a centrally symmetric polygon into centrally symmetric pieces.

\section{Proof of Theorem~\ref{thm:upperbound}}\label{sec:upperbound}

Let $P$ be a $(2k)$-gon, with $k \geq 4$. Since the combinatorial types of the possible tilings of any two centrally symmetric $(2k)$-gons are equal, we may assume that $P$ is regular, and that its sides are of unit length. Throughout this section, $P$ is a centrally symmetric $(2k)$-gon, and $\T$ is a tiling of $P$.
Furthermore, we denote by $\F$ the CW-complex induced by $\T$. The faces of $\F$ are called also \emph{tiles} of $\F$.

We start with two lemmas. 

\begin{lem}\label{lem:edgeclasses}
Let $E$ be an edge of $\F$. Then there is a unique sequence $E_1, E_2, \ldots, E_m$ of edges of $\F$, consisting of translates of $E$ satisfying the properties:
\begin{itemize}
\item $E_1$ and $E_m$ are contained in $\bd P$,
\item for $j=1,2,\ldots, m-1$, $E_j$ and $E_{j+1}$ belong to the same tile in $\F$.
\end{itemize}
\end{lem}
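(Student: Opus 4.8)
The plan is to realise the required sequence as a connected component of an auxiliary graph on edges of $\F$, and to rule out the only obstruction to this — that the component is a closed loop — by a geometric extremal argument.

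I first record the two local facts that drive everything. Since every tile is a centrally symmetric convex polygon, its edges split into opposite pairs, and two opposite edges of a tile are parallel segments of equal length, hence translates of each other as point sets; write $\sigma_F(A)$ for the edge of a tile $F$ opposite to an edge $A$ of $F$. Second, since $\T$ is edge-to-edge and its members are convex and nonoverlapping, every edge of $\F$ either lies in $\bd P$ and is then an edge of exactly one tile, or is an edge of exactly two tiles, which lie on opposite sides of its supporting line. Now let $\mathcal{E}$ be the set of all edges of $\F$ that are translates of $E$ (so $E\in\mathcal{E}$), and let $\Gamma$ be the graph on vertex set $\mathcal{E}$ in which $A$ and $B$ are joined precisely when they are the two opposite edges of some common tile. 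Since the edge opposite to a translate of $E$ inside a tile is again a translate of $E$, $\Gamma$ is well defined; and $\Gamma$ is simple, since $\sigma_F(A)\neq A$ and, for an interior $A$ lying in tiles $F,F'$, the neighbours $\sigma_F(A)$ and $\sigma_{F'}(A)$ lie strictly on opposite sides of the line of $A$, so they are distinct.

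Next I compute degrees in $\Gamma$. If $A\in\mathcal{E}$ lies in $\bd P$, it belongs to a unique tile $F$, whose opposite edge $\sigma_F(A)$ is its only neighbour, so $\deg A=1$. If $A\in\mathcal{E}$ is interior, it belongs to exactly two tiles $F,F'$, its neighbours are exactly $\sigma_F(A)$ and $\sigma_{F'}(A)$, and these are distinct, so $\deg A=2$. Hence every connected component of $\Gamma$ is a finite path or a finite cycle, and $E$ lies in one of them. If the component of $E$ is a path, then reading it from one endpoint to the other gives a sequence $E_1,\dots,E_m$ of translates of $E$ in which consecutive members share a tile; its two endpoints are the degree-$1$ vertices of the path, which are precisely the members of $\mathcal{E}$ lying in $\bd P$; and the sequence contains $E$. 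This is exactly the asserted sequence, and since a path has a unique vertex-ordering up to reversal, such a sequence is unique (up to orientation). So it only remains to exclude that the component of $E$ is a cycle.

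This last point is the main obstacle, and it is where convexity of the tiles is used in an essential way. Suppose $E_1,\dots,E_m,E_{m+1}=E_1$ is a cycle in $\Gamma$, with $E_j$ and $E_{j+1}$ the opposite edges of a tile $G_j$; since $\Gamma$ is simple, $m\ge 3$. All the $E_j$ are parallel translates of $E$, and each $G_j$, being convex with parallel opposite edges $E_j$ and $E_{j+1}$, is contained in the closed strip bounded by the two distinct parallel lines $\ell(E_j)$ and $\ell(E_{j+1})$. Fix a unit normal $u$ of $E$ and choose $j_0$ so that the linear functional $\langle\cdot,u\rangle$ attains its maximum value $H$ over $\ell(E_1),\dots,\ell(E_m)$ on $\ell(E_{j_0})$. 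The two tiles incident to the interior edge $E_{j_0}$ are $G_{j_0-1}$ and $G_{j_0}$ (indices mod $m$), and they are distinct: otherwise that single tile would have the two distinct edges $E_{j_0-1}$ and $E_{j_0+1}$ both opposite to $E_{j_0}$. By the strip containment and the maximality of $H$, each of $G_{j_0-1}$ and $G_{j_0}$ lies in the half-plane $\{\langle x,u\rangle\le H\}$, yet they share the edge $E_{j_0}\subset\{\langle x,u\rangle=H\}$. But two nonoverlapping convex polygons lying in a closed half-plane and sharing an edge on its bounding line must overlap near the relative interior of that edge — a contradiction. Hence the component of $E$ is a path, which gives existence, and the uniqueness observed above completes the proof plan.
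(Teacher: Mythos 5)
Your argument is correct and complete. Note that the paper gives no proof to compare against: it declares Lemma~\ref{lem:edgeclasses} straightforward and omits the argument, citing \cite{GHA97}. Your write-up supplies exactly the missing content in a clean way: the graph on the translates of $E$ (joining opposite edges of a common tile) has degree $1$ at boundary edges and degree $2$ at interior edges, so the component of $E$ is a path or a cycle, and the only genuinely nontrivial point --- excluding a cycle --- is handled correctly by your extremal argument: at a line $\ell(E_{j_0})$ maximizing the normal functional, the two distinct tiles meeting along $E_{j_0}$ both lie in the strips on the non-maximal side, hence on the same side of $\ell(E_{j_0})$, and two nonoverlapping convex tiles sharing an edge cannot lie on the same side of its line. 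All the supporting facts you use (opposite edges of a centrally symmetric tile are translates on distinct parallel lines, an interior edge lies in exactly two tiles by the edge-to-edge property, a boundary edge in exactly one) are justified, and it is worth noting that your proof uses only convexity, central symmetry and the edge-to-edge property, not irreducibility --- consistent with how the paper later uses the classes for arbitrary edge-to-edge tilings.
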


The proof of Lemma~\ref{lem:edgeclasses} is straightforward, and hence, we omit it, and note that the same property was observed in \cite{GHA97} as well.
In the following, for any edge $E$ we call this sequence the \emph{class} of $E$. Clearly, these classes define an equivalence relation on the set of edges of $\F$. Furthermore, note that increasing or decreasing the length of every member of any given class does not change the combinatorial type of $\F$. Hence,
without loss of generality, we may assume that any two parallel edges of $\F$ are of equal length.

\begin{cor}\label{cor:edgesareparallel}
Any edge of $\F$ is parallel to some side of $P$. Thus, any tile in a tiling of a $(2k)$-gon has at most $2k$ sides.
\end{cor}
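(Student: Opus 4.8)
The plan is to derive the corollary directly from Lemma~\ref{lem:edgeclasses}. First I would take an arbitrary edge $E$ of $\F$ and invoke Lemma~\ref{lem:edgeclasses} to obtain its class $E_1, E_2, \ldots, E_m$, where every $E_j$ is a translate of $E$ and both $E_1$ and $E_m$ are contained in $\bd P$. The key observation is that $E_1$, being an edge of a convex tile, is a straight line segment, so it cannot contain a vertex of $P$ in its relative interior; hence $E_1$ lies in a single side of $P$ and is therefore parallel to that side. Since $E$ is a translate of $E_1$, it follows that $E$ is parallel to the same side of $P$. This proves the first assertion.

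For the second assertion I would use that $P$ is centrally symmetric, so its $2k$ sides fall into exactly $k$ pairs of parallel sides; by the first part, every edge of every tile $F$ of $\F$ is parallel to one of these $k$ directions. Now I would apply the elementary fact about convex polygons that, reading the outer normals (or edge vectors) of $F$ in cyclic order, their arguments strictly increase through a full turn of $2\pi$; consequently at most two edges of $F$ can be parallel to any given line, namely the (at most) two edges corresponding to the two opposite orientations of that line. Since only $k$ line directions are available, $F$ has at most $2k$ edges.

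There is essentially no obstacle here. The only points requiring a word of care are the remark that a straight segment contained in $\bd P$ must lie within a single side (so that ``contained in $\bd P$'' genuinely yields ``parallel to a side of $P$''), and the routine counting of the edges of a convex polygon grouped by direction. Both are standard, so the corollary follows immediately once Lemma~\ref{lem:edgeclasses} is in hand.
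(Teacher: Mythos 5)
Your argument is correct and is exactly the route the paper intends: the corollary is stated as an immediate consequence of Lemma~\ref{lem:edgeclasses} (each edge of $\F$ is a translate of an edge lying in $\bd P$, hence in a single side of $P$), combined with the standard fact that a convex polygon has at most two edges parallel to any of the $k$ side directions of the centrally symmetric $(2k)$-gon. The paper omits the proof, and your write-up supplies precisely the missing details, including the observation that a straight edge in $\bd P$ cannot pass through a vertex of $P$.
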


\begin{cor}\label{cor:oppositeedges}
Let $E=[a,b]$ and $E'=[c,d]$ be opposite edges of $P$, in counterclockwise order. Assume that the edges of $\F$ in $E$ are $E_1, E_2, \ldots, E_s$ in counterclockwise order. Then the edges of $\F$ in $E'$ are the translates of $E_1 ,E_2, \ldots, E_s$ by $d-a$, in \emph{clockwise} order.
\end{cor}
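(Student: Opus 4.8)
The plan is to introduce coordinates adapted to the pair $E,E'$, attach to each edge of $\F$ lying in $E$ a \emph{horizontal zone} of tiles that crosses $P$ all the way to $E'$, and then show that distinct zones cannot cross one another.

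After a rotation and a translation I may assume that $E$ is the horizontal side of $P$ of smallest height, with $a=(0,0)$ and $b=(1,0)$. Writing $z=(z_x,h/2)$ for the centre of $P$, central symmetry gives $E'=[c,d]$ with $c=2z-a=(2z_x,h)$ and $d=2z-b=(2z_x-1,h)$, so that, traversing $\bd P$ counterclockwise, $E$ is run from left to right and $E'$ from right to left, as in the statement. Note $E+(d-a)=E'$ as sets (indeed $a\mapsto d$ and $b\mapsto c$), so listing the translates $E_i+(d-a)\subset E'$ from left to right is the same as listing them clockwise. Hence it is enough to prove that, for each $i$, the zone defined below joins the $i$-th edge of $\F$ in $E$ (counted from the left) to the $i$-th edge of $\F$ in $E'$ (counted from the left): opposite edges of a centrally symmetric convex polygon are equal and parallel, so chaining through one class of Lemma~\ref{lem:edgeclasses} shows that all its edges share a common length; thus the $i$-th edge of $\F$ in $E'$ has the same length as the $i$-th edge of $\F$ in $E$, and since $E+(d-a)=E'$ it is exactly its translate by $d-a$.

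To build the zone of an edge $E_i$ of $\F$ in $E$, I apply Lemma~\ref{lem:edgeclasses} to obtain its class $E_i=G_0,G_1,\dots,G_r$, a sequence of translates of $E_i$ with $G_0\subset E$ and $G_r\subset\bd P$ (one endpoint of the class must be $E_i$, as an edge on $\bd P$ lies in just one tile). By Corollary~\ref{cor:edgesareparallel} every tile is a centrally symmetric polygon all of whose edges are parallel to sides of $P$, so consecutive members $G_{j-1},G_j$ are the two horizontal edges of a common tile $F^j$; and because a convex polygon has at most one horizontal edge at its minimum height and at most one at its maximum height, walking along the class away from $E$ forces $G_{j-1}$ to be the bottom edge and $G_j$ the top edge of $F^j$. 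Consequently $F^1,\dots,F^r$ occupy consecutive, strictly increasing height intervals filling $[0,h]$, so $G_r$ is a horizontal edge of $\F$ on $\bd P$ at height $h$, i.e.\ $G_r\subset E'$. Using convexity of the tiles and the fact that consecutive ones meet precisely along the horizontal edges $G_j$, I then check that the strip $S_i=\bigcup_j F^j$ meets the horizontal line of each height $y\in[0,h]$ in a single nondegenerate segment $I_i(y)$, with endpoints continuous in $y$ and with $I_i(0)=E_i$, $I_i(h)=G_r$.

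The crux is the non-crossing of the zones $S_1,\dots,S_s$. A tile has at most one horizontal edge at its minimum height, hence lies in at most one zone, so distinct zones share no tile and therefore, for every $y$, the segments $I_i(y)$ and $I_j(y)$ with $i\ne j$ have disjoint interiors. Fix $i<j$, so $I_i(0)=E_i$ lies strictly to the left of $I_j(0)=E_j$; if the left-to-right order of the two segments were ever reversed, the difference of their left endpoints, continuous in $y$, would change sign and hence vanish at some intermediate height, where both segments are nondegenerate --- forcing a common interior point, a contradiction. So $I_i(y)$ stays weakly left of $I_j(y)$ for all $y$, and at $y=h$ this says the edge of $\F$ in $E'$ reached by $S_i$ lies weakly left of the one reached by $S_j$. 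Conversely, each edge of $\F$ in $E'$ lies on $\bd P$, hence is an edge of a unique tile, which belongs to a unique zone; running that zone downward, its heights strictly decrease to a horizontal boundary edge, necessarily in $E$, so it is one of $S_1,\dots,S_s$, and distinct zones reach distinct edges of $E'$. Thus $S_1,\dots,S_s$ reach the edges of $\F$ in $E'$ bijectively, the bijection is order-preserving by the previous sentence, and so $S_i$ reaches the $i$-th such edge from the left; together with the reduction of the second paragraph this proves the corollary. I expect the non-crossing argument to be the only genuine obstacle; everything else is routine bookkeeping with convex cross-sections.
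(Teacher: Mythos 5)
Your proof is correct and takes the route the paper intends: the paper states Corollary~\ref{cor:oppositeedges} without proof, as an immediate consequence of the edge classes of Lemma~\ref{lem:edgeclasses}, and your ``zones'' are precisely those classes, traced monotonically in height from $E$ to the only other side of $P$ parallel to it, with the non-crossing argument accounting for the reversal of the cyclic order. So this is essentially a detailed write-up of the argument the paper leaves implicit, and no gap remains.
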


\begin{lem}\label{lem:maxnumberofedges}
The complex $\F$ has at most $2k-3$ edges on any edge $E$ of $P$.
\end{lem}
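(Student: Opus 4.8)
The plan is to argue by contradiction: assume $\F$ has at least $2k-2$ edges on $E$, and exhibit a proper subfamily of $\T$ whose union is convex, contradicting irreducibility.

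First I would reduce the statement to a count of tiles. Fix coordinates so that $E$ is the bottom side of $P$, horizontal, and let the edges of $\F$ on $E$ be $E_1,\dots,E_s$ from left to right, with $u_0,u_1,\dots,u_s$ the successive vertices of $\F$ on $E$ (so $u_0,u_s$ are the endpoints of $E$). A tile of $\T$ meeting $E$ meets it in a single segment, and since that tile covers a neighbourhood of the relative interior of this segment, no vertex of $\F$ lies in that relative interior; hence each $E_i$ is a side of a unique tile $F^{(i)}$, distinct $E_i$ belong to distinct tiles, and $s$ equals the number of tiles of $\T$ having a side on $E$.

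Next comes the angle bookkeeping. By Corollary~\ref{cor:edgesareparallel} every side of every tile is parallel to a side of $P$, so every interior angle of a tile is an integer multiple of $\pi/k$ in $[\pi/k,(k-1)\pi/k]$. Write $\alpha_i,\beta_i$ for the interior angles of $F^{(i)}$ at $u_{i-1}$ and at $u_i$. Since $F^{(i)}$ lies on the inner side of the line of $E$ and is centrally symmetric, $\alpha_i+\beta_i\ge\pi$, with equality exactly when $F^{(i)}$ is a parallelogram and $\alpha_i+\beta_i\ge\pi+\pi/k$ otherwise. At an interior vertex $u_i$ the tile angles sum to $\pi$, so $\beta_i+\alpha_{i+1}\le\pi$, and by the edge-to-edge property equality holds precisely when $F^{(i)}$ and $F^{(i+1)}$ share their common side at $u_i$; at the endpoints, $\alpha_1,\beta_s\le(k-1)\pi/k$. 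Combining $\beta_i\ge\pi-\alpha_i$ with $\beta_i+\alpha_{i+1}\le\pi$ yields $\alpha_1\ge\alpha_2\ge\dots\ge\alpha_s$ and $\alpha_i-\alpha_{i+1}\ge(\alpha_i+\beta_i)-\pi\ge0$, hence $\sum_{i=1}^{s-1}(\alpha_i-\alpha_{i+1})=\alpha_1-\alpha_s\le(k-2)\pi/k$. Thus at most $k-2$ indices $i$ have $\alpha_i>\alpha_{i+1}$; for every other index $\alpha_i=\alpha_{i+1}$, which forces $F^{(i)}$ to be a parallelogram sharing its right side with $F^{(i+1)}$.

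The conclusion would then follow from irreducibility. If two consecutive indices $i,i+1$ both satisfy $\alpha_i=\alpha_{i+1}$, then $F^{(i)}$ and $F^{(i+1)}$ are parallelograms of equal slope glued along a common side, so $F^{(i)}\cup F^{(i+1)}$ is again a parallelogram; this is a subfamily of $\T$ of size $2$, and it is proper because $|\T|\ge s\ge 2k-2>2$, contradicting irreducibility. A short counting argument — there are at least $s-1-(k-2)$ indices with $\alpha_i=\alpha_{i+1}$, and at most $k-2$ remaining ones available to separate them — shows that two such consecutive indices must occur as soon as $s\ge 2k-1$. I expect the genuine difficulty to be the borderline value $s=2k-2$: there all the inequalities above become equalities, forcing $\alpha_1=(k-1)\pi/k$, $\alpha_s=\pi/k$, and a rigid alternation along $E$ of ``parallelogram sharing a side'' and ``single downward step of size $\pi/k$''; ruling out this configuration requires a closer geometric analysis of the corner tile $F^{(1)}$ (whose left side then lies along the side of $P$ at $u_0$) together with $F^{(2)}$, again producing a small convex subfamily. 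Once $s=2k-2$ is eliminated we obtain $s\le 2k-3$, as claimed.
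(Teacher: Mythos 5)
Your angle bookkeeping along $E$ is essentially sound: the inequalities $\alpha_i+\beta_i\ge\pi$ (equality iff $F^{(i)}$ is a parallelogram, by central symmetry), $\beta_i+\alpha_{i+1}\le\pi$ at interior vertices, the quantization of all angles in multiples of $\pi/k$, and the bound $\alpha_1\le (k-1)\pi/k$ do combine, together with the exclusion of two consecutive ``equality'' indices via irreducibility, to give $s\le 2k-2$. But that is where the proposal stops short of the statement: the lemma claims $2k-3$, and you explicitly defer the elimination of the borderline case $s=2k-2$ to ``a closer geometric analysis'' that is not carried out. This is a genuine gap, not a routine loose end. In the borderline configuration your own constraints force $F^{(1)}$ to be a parallelogram filling the corner of $P$ and sharing its full right side with $F^{(2)}$, but $F^{(1)}\cup F^{(2)}$ need not be convex: convexity would require the angle of $F^{(2)}$ at the \emph{upper} endpoint of the shared side to be exactly $\pi/k$, and nothing in your bookkeeping (which only sees angles at the bottom vertices on $E$) controls the angles at the upper endpoints of the shared edges. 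So the suggested route to a small convex subfamily does not obviously close the case.

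The paper avoids the borderline case altogether by counting a different set of objects: the edges of $\F$ that intersect $E$ but are not contained in it, including the two lying along $\bd P$ at the endpoints of $E$. Their slopes form a non-increasing sequence in which each of the $k-1$ admissible directions (all edges being parallel to sides of $P$) occurs at most twice, since three consecutive parallel such edges would produce two adjacent parallelogram tiles with convex union, contradicting irreducibility. Hence there are at most $2k-2$ such transversal edges, and since every interior vertex of $E$ emits at least one of them while the two endpoint edges are accounted for separately, $E$ is cut into at most $2k-3$ segments. If you want to salvage your approach, the cleanest fix is exactly this switch: in your borderline pattern each ``drop'' vertex emits at least two transversal edges and each ``equality'' vertex one, which together with the two boundary edges already exceeds the $2(k-1)$ available slots, so importing the paper's slope-multiplicity count would finish the argument; as written, however, the proof establishes only $s\le 2k-2$.
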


\begin{proof}
Consider a Cartesian coordinate system such that the $y$-coordinate axis contains the edge $E=[a,b]$, the $y$-coordinate of $a$ is greater than that of $b$, and
$P$ is contained in the closed half plane $\{ x \geq 0 \}$.
Let $s$ be the number of edges of $\F$ that intersect, but are not contained in, $E$.
We label these edges in the following way:
\begin{itemize}
\item $a \in E_1$, $b \in E_s$, and $E_1, E_s \in \bd P$;
\item for any $i=2,3,\ldots,s-1$, the closed half plane, bounded by the line of $E_i$ and containing $E_1$, contains exactly the edges $E_1, E_2, \ldots, E_i$.
\end{itemize}
Let the slope of $E_i$ be denoted by $\mu_i$.

We show that the sequence $\mu_1, \mu_3, \ldots, \mu_s$ is decreasing, and no value is attained more than two times.
Indeed, the fact that $\mu_i \geq \mu_{i+1}$ follows from the fact that every tile is a centrally symmetric convex polygon. Furthermore,
if $\mu_i = \mu_{i+1}$, then $E_i$ and $E_{i+1}$ belongs to the same tile, which is, thus, the parallelogram with $E_i$ and $E_{i+1}$ as parallel sides.

Now, consider the case $\mu_{i-1}=\mu_i=\mu_{i+1}$.
Let $P_i$ and $P_{i+1}$ be the parallelograms defined by $E_{i-1}$ and $E_i$, and by $E_i$ and $E_{i+1}$, respectively. Then both $P_i$ and $P_{i+1}$ are
tiles of $\F$, $P_i \cup P_{i+1}$ is convex and is properly contained in $P$, which contradicts our assumption that $\T$ is irreducible.

We have shown that the sequence $\mu_1, \mu_3, \ldots, \mu_s$ is decreasing, and no value is attained more than two times.
Corollary~\ref{cor:edgesareparallel} implies that the slope of any edge of $\F$, not parallel to $E$ is $\tan \left( -\frac{\pi}{2} + \frac{i\pi}{k} \right)$ for some $i \in \{1,2,\ldots, k-1 \}$. This yields that $s \leq 2k-2$, from which it follows that $E$ is dissected into at most $s-1 \leq 2k-3$ nondegenerate segments.
\end{proof}

\begin{cor}\label{cor:atmosttwoedges}
For any edge $E$ of $P$ and direction $v \in \Sph^1$ not parallel to $E$, there are at most two edges of $\F$ intersecting $E$ and parallel to $v$.
\end{cor}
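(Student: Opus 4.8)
The plan is to read the statement off directly from the slope analysis already carried out in the proof of Lemma~\ref{lem:maxnumberofedges}. Fix an edge $E$ of $P$ and a direction $v\in\Sph^1$ that is not parallel to $E$, and set up the coordinate system of that proof, with $E$ on the $y$-axis and $P\subseteq\{x\ge 0\}$. Since $P$ is convex and $E$ is a face of $P$, we have $P\cap\{x=0\}=E$; hence every edge of $\F$ lying on the line of $E$ is in fact contained in $E$, and such an edge is parallel to $E$, so it is not parallel to $v$. Consequently, every edge of $\F$ that meets $E$ and is parallel to $v$ is one of the edges $E_1,E_2,\dots,E_s$ of that proof — the edges of $\F$ intersecting $E$ but not contained in it — and it suffices to bound how many of the $E_i$ have slope equal to that of $v$.

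Next I would invoke the two facts established in the proof of Lemma~\ref{lem:maxnumberofedges}: the slopes satisfy $\mu_1\ge\mu_2\ge\cdots\ge\mu_s$, and no value occurs among three consecutive edges $E_{i-1},E_i,E_{i+1}$ (this is where irreducibility enters, through the two parallelograms whose union would be a convex tile properly contained in $P$). Because the sequence $(\mu_i)_{i=1}^{s}$ is monotone, the set of indices $i$ for which $\mu_i$ equals a prescribed value is a block of consecutive integers; by the second fact this block contains at most two indices. Hence at most two of $E_1,\dots,E_s$ are parallel to $v$, which is exactly the assertion of the corollary.

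There is essentially no obstacle here, since the work was done in Lemma~\ref{lem:maxnumberofedges}; the only points needing a word of care are the observation that $P\cap\{x=0\}=E$, which rules out any contribution from edges lying on the line of $E$, and the remark that monotonicity of $(\mu_i)$ upgrades ``no slope is attained three times in a row'' to ``no slope is attained more than twice in total.'' Both are immediate from what has already been proved.
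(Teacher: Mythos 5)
Your argument is correct and is exactly the paper's intended justification: the corollary is stated without proof as an immediate consequence of the facts established inside the proof of Lemma~\ref{lem:maxnumberofedges}, namely that the slopes of the edges meeting $E$ but not contained in it form a monotone sequence in which no value is attained more than twice. Your added remarks (edges on the line of $E$ are contained in $E$ and hence not parallel to $v$, and monotonicity turns ``no three consecutive equal slopes'' into ``no slope more than twice'') just make explicit what the paper leaves implicit.
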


Now we prove Theorem~\ref{thm:upperbound}.

\begin{proof}[Proof of Theorem~\ref{thm:upperbound}]
First we estimate the number of edges of $\F$ from above.
Consider an edge $E$ of $\F$. Then, by Lemma~\ref{lem:edgeclasses}, $E$ is the translate of an edge of $\F$ in $\bd P$.
Since, according to our assumptions, any two parallel edges of $\F$ are of equal length, and any edge of $P$ is of unit length, and, by Lemma~\ref{lem:maxnumberofedges}, is dissected into at most $2k-3$ edges of $\F$, it follows that the length of $E$, and any other edge of $\F$, is at least $\frac{1}{2k-3}$.

Let $E'$ be an edge of $\F$, in the class of $E$, which is consecutive to $E$ in this class. Then there is a face $F$ of $\F$ in which $E$ and $E'$ are opposite sides. Since every angle of $F$ is a multiple of $\frac{\pi}{k}$, and each side of $F$ is of length at least $\frac{1}{2k-3}$, the distance of $E$ and $E'$ is at least $\frac{\sin \frac{\pi}{k}}{2k-3}$. Now, an elementary computation shows that the distance of any two parallel sides of $P$ is $\cot \frac{\pi}{2k}$. Thus, the number of edges in the class of $E$ is at most $\frac{\cot \frac{\pi}{2k}}{\frac{\sin \frac{\pi}{k}}{2k-3}} = \frac{2k-3}{2 \sin^2 \frac{\pi}{2k}}$.
Since there are at most $2k-3$ classes of edges parallel to any given pair of parallel sides of $P$, and there are $k$ pairs of parallel sides of $P$, the number $N$ of edges of $\F$ is at most $\frac{k(2k-3)^2}{2 \sin^2 \frac{\pi}{2k}}$.
Using the concavity of the function $x \mapsto \sin x$, and the inequality $\sin x \geq x \sin \frac{\pi}{8}$ for $x \in \left[ 0, \frac{\pi}{8} \right]$, we obtain that
\[
N \leq \frac{k(2k-3)^2}{2 \sin^2 \frac{\pi}{2k}} \leq \frac{2k^3(2k-3)^2}{\left( 2-\sqrt{2} \right) \pi^2 } .
\] 

Finally, observe that the vertices and edges of $\F$ form a loopless (and isthmusless) planar map. This map can be made \emph{rooted} by choosing a vertex $v$ of $\F$ as root, an edge $E$ starting at $v$, and the left or right-hand side of $E$ \cite{T63}. Clearly, the number of the combinatorial classes of loopless rooted maps with at most $N$ edges is an upper bound on the number of combinatorial classes of the tilings of $P$. By formula (6) of \cite{WL75}, the number $t_N$ of loopless (or isthmusless) rooted maps with $N$ edges is $t_N = \frac{6 (4N+1)!}{N! (3N+3)!}$.
Thus, the number of combinatorial classes of tilings is at most $\frac{6N (4N+1)!}{N! (3N+3)!}$.
\end{proof}

\section{Proof of Theorem~\ref{thm:main}}\label{sec:main}

Like in Section~\ref{sec:upperbound}, let $\T$ be a tiling of $P$, where $P$ is a regular octagon of unit side length, and let $\F$ be the CW-complex induced by $\F$.  We denote the sides of $P$ by $E_1, E_2, \ldots, E_8$, in counterclockwise order. For simplicity, we imagine that $E_1$ is horizontal, and, without loss of generality, we assume that any two parallel edges of $\F$ have equal lengths. The \emph{neighbors} of the side $E_i$ are $E_{i-1}$ and $E_{i+1}$.

By Lemma~\ref{lem:maxnumberofedges}, any edge of $P$ consists of at most five edges of $\F$.
For simplicity, if an edge $E$ of $P$ is dissected into $m$ edges of $\F$, where $m \in \{ 1,2,3,4,5 \}$, we say that the \emph{type} of $E$ is $m$.
Then, by Corollary~\ref{cor:oppositeedges}, the types of opposite edges of $P$ are the same.
If the types of $E_1$, $E_2$, $E_3$ and $E_4$ are $k_1$, $k_2$, $k_3$ and $k_4$, respectively, then we say that the type of $P$ is $k_1/k_2/k_3/k_4$.
In the proof, we may write about combinatorially equivalent subsets of $\T$. This concept can be defined analogously to the combinatorial equivalence of tilings.

First, we need two lemmas.

\begin{lem}\label{lem:norightangle}
Let $E'_1$ and $E'_2$ be two perpendicular edges of $\F$, with a common endpoint. Then there is a tile $F \in \F$ containing both $E'_1$ and $E'_2$. Furthermore, $F$ is either a rectangle or a hexagon.
\end{lem}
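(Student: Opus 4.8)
The plan is to analyse the tiles of $\F$ incident to the common endpoint $v$ of $E_1'$ and $E_2'$, and to split into cases according to how the angles of those tiles fill the right-angled sector at $v$ between the two edges. Since the regular octagon carries the full dihedral symmetry group, I may assume that $E_1'$ runs ``east'' and $E_2'$ runs ``north'' from $v$; they bound a $90^\circ$ sector $W$ at $v$, and, because the octagon has no right angle, $W$ lies on the side of $v$ that is covered by tiles (a short check is needed when $v\in\bd P$). Two facts will be used repeatedly: (i) $\F$ is a CW-complex, so no vertex of $\F$ lies in the relative interior of an edge of $\F$; together with the edge-to-edge property this forces every tile meeting $v$ to have $v$ as a vertex, and the edge at $v$ of such a tile running east (resp. north), if any, to coincide with $E_1'$ (resp. $E_2'$); (ii) $E_1'$ and $E_2'$ are edges of $\F$, hence lie on the boundary of every tile they meet, so no corner of a tile at $v$ can have $E_1'$ or $E_2'$ in its interior. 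By Corollary~\ref{cor:edgesareparallel} all sides of a tile are parallel to sides of $P$, so every tile-angle is $45^\circ$, $90^\circ$ or $135^\circ$; an elementary computation with centrally symmetric polygons of at most eight sides shows a tile with a $90^\circ$ angle is a rectangle or a hexagon, and a tile with a $45^\circ$ angle is a parallelogram. By (ii) the corners at $v$ of the tiles lying in $W$ have angles in $\{45^\circ,90^\circ\}$ summing to $90^\circ$, so either (a) a single tile $F$ has a $90^\circ$ corner at $v$, or (b) exactly two tiles, each with a $45^\circ$ corner, lie in $W$.

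In case (a) the tile $F$ fills $W$, so by (i) its two edges at $v$ are exactly $E_1'$ and $E_2'$; hence $F$ contains both and is a rectangle or hexagon, which is the assertion. It remains to exclude case (b). Suppose (b) holds, and let $F_1$ be the parallelogram filling the $[0^\circ,45^\circ]$ part of $W$, with edges $E_1'$ and a $45^\circ$-edge $g$ at $v$, and $F_2$ the one filling the $[45^\circ,90^\circ]$ part, with edges $g$ and $E_2'$ at $v$ (both contain the same $g$, again by (ii) and the non-overlapping of edges of $\F$). A computation of angles shows $F_1$ and $F_2$ each have a $135^\circ$ corner at the far endpoint $w_1$ of $g$, so $F_1\cup F_2$ is a hexagon with a single reflex vertex, $w_1$, and the ``notch'' at $w_1$ is a $90^\circ$ sector bounded by a horizontal edge of $\F$ running east and a vertical one running north --- that is, $w_1$ with these two edges reproduces exactly the configuration of $v$ with $E_1',E_2'$. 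Since $F_1$ and $F_2$ already fill $270^\circ$ at $w_1$, the point $w_1$ is interior to $P$, and the notch is covered by tiles whose corners there sum to $90^\circ$, so one is in one of the same two subcases.

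If a single tile $Q$ fills the notch, then $Q$ has a $90^\circ$ corner at $w_1$ sharing its two edges with $F_1$ and $F_2$; since $Q$ is a rectangle or hexagon (all its angles $\le 135^\circ$), a direct check shows $F_1\cup F_2\cup Q$ is a convex hexagon. A hexagon cannot equal the octagon $P$, so $\{F_1,F_2,Q\}$ has convex union but is not all of $P$; hence $|\T|>3$, and this subfamily contradicts irreducibility. Otherwise the notch is covered by two parallelograms meeting along a $45^\circ$-edge $[w_1,w_2]$ with $w_2=w_1+b$, where $b$ is the common vector of the $45^\circ$-edges of $\F$. Iterating, I obtain points $w_0=v,w_1,w_2,\dots$ with $w_i=v+ib$, each a vertex of a tile of $\F$, hence in $P$; this ``staircase'' must either stop in the single-tile subcase (a contradiction) or continue forever, which is impossible since $|w_i-v|=i|b|\to\infty$ while $P$ is bounded. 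Therefore (b) cannot occur, and the lemma follows.

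The main obstacle is case (b): the angle balance in $W$ does not by itself forbid a $45^\circ+45^\circ$ split at $v$, and ruling it out requires tracing the forced ``staircase'' of parallelograms and invoking irreducibility --- together with the fact that $P$ has eight rather than six sides --- at the far end of the staircase; the rest of the argument is routine angle bookkeeping.
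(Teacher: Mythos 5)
Your proof is correct and follows essentially the same route as the paper: assuming no single tile contains both edges, you are forced into two parallelograms sharing a $45^\circ$ edge at $v$, and at the far endpoint of that edge a third tile closes up a convex (hexagonal) union of tiles, contradicting irreducibility. The only difference is a sub-step: where the paper kills the possibility of a further $45^\circ$ edge at the far endpoint with a one-line appeal to irreducibility (two adjacent parallelograms already form a convex union), you instead iterate the configuration as a staircase and terminate it by boundedness, which is valid but longer.
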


\begin{proof}
Let $p$ be the common endpoint of $E'_1$ and $E'_2$. We prove by contradiction, and assume that there is no tile containing both $E'_1$ and $E'_2$. Then there are two tiles $F_1$ and $F_2$, having a common edge $E'_3$, such that $E'_1 \subset F_1$, $E'_2 \subset F_2$, and the angles of $F_1$ and $F_2$ at $p$ are $\frac{\pi}{4}$.
Since every edge in $\F$ is parallel to some edge of $P$, it follows that $F_1$ and $F_2$ are parallelograms. Let the endpoint of $E'_3$, different from $p$, be $q$, and, for $i=1,2$, let $E''_i$ be the edge of $T_i$ parallel to $E_i$. Then, since the decomposition is irreducible, there is no edge of $\F$ containing $q$, but $E''_1$, $E''_2$ and $E'_3$.
Let $F_3$ be the tile containing $E''_1$ and $E''_2$. Then $F_1 \cup F_2 \cup F_3$ is either a parallelogram or a convex hexagon, contradicting the irreducibility of $\T$. Thus, $E'_1$ and $E'_2$ belong to the same tile $F$. The second statement follows from the observation that each side of $F$ is parallel to a side of $P$.
\end{proof}

\begin{figure}[ht]
\includegraphics[width=0.8\textwidth]{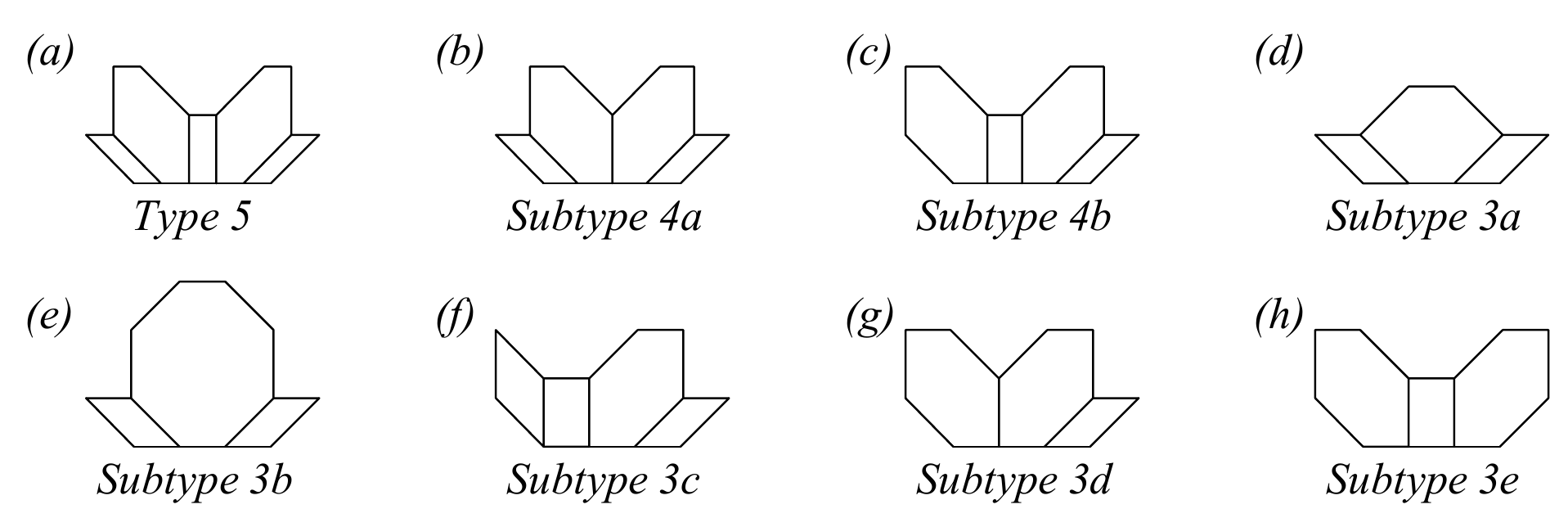}
\caption[]{Possible subtypes of an edge of $P$}
\label{fig:types}
\end{figure}

\begin{lem}\label{lem:types}
Let $E$ be an edge of $P$, of type $3$, $4$ or $5$. Then the collection of the tiles of $\F$, intersecting $E$, is combinatorially equivalent to one of the configurations in Figure~\ref{fig:types}.
\end{lem}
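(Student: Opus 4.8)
The plan is to analyze, for a fixed edge $E$ of $P$, the local structure of the tiles meeting $E$ by combining the two hard constraints we have: the slope/monotonicity analysis from the proof of Lemma~\ref{lem:maxnumberofedges} (the slopes of the ``upper'' bounding edges decrease, and no slope is repeated more than twice), and the fact that in a regular octagon every edge of $\F$ has one of only four directions (the direction of $E_1,\dots,E_4$). Write $E = E_j$ and suppose $E$ has type $t \in \{3,4,5\}$, so $E$ is subdivided by $t-1\le 4$ interior vertices into $t$ edges of $\F$; let $E_1,\dots,E_s$ (with $s = t+1 \le 6$) be the edges of $\F$ that meet $E$ but are not contained in it, labelled as in the proof of Lemma~\ref{lem:maxnumberofedges}, and let $\mu_1 \ge \mu_2 \ge \dots \ge \mu_s$ be their slopes. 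First I would record that each $\mu_i$ is one of the three slopes $\tan(-\pi/2 + i\pi/4)$, $i\in\{1,2,3\}$, i.e. $\infty$ (vertical, the direction of $E$ itself is excluded so actually three admissible non-$E$ directions) — more precisely the three directions making angles $45^\circ, 90^\circ, 135^\circ$ with $E$ — and that consecutive equal slopes force a parallelogram tile, while three consecutive equal slopes are forbidden by irreducibility. So the nondecreasing-by-two slope sequence $(\mu_1,\dots,\mu_s)$ with $s\le 6$ entries drawn from a $3$-element ordered set, each used at most twice, is heavily constrained: for $s=4$ it is essentially forced up to a couple of options, for $s=5$ and $s=6$ it is completely forced or nearly so.

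Next I would turn each admissible slope sequence into a genuine combinatorial configuration of tiles. The key additional input is Lemma~\ref{lem:norightangle}: whenever two consecutive edges $E_i, E_{i+1}$ of $\F$ along $E$ meet at a right angle (which happens exactly when one has the $90^\circ$ direction and the neighbour has the $45^\circ$ or $135^\circ$ direction, or when an endpoint of $E$ is involved and the octagon's own $135^\circ$ interior angle is in play), the two edges lie in a common tile which is a rectangle or a hexagon; this pins down the shape of the tile wedged between $E_i$ and $E_{i+1}$. For the remaining consecutive pairs, the tile between $E_i$ and $E_{i+1}$ is a centrally symmetric convex polygon with one side on $E$ and two prescribed other directions among its sides, so it is a parallelogram, a (possibly right-angled) trapezoid-like hexagon, or a hexagon — again a short finite list. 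Doing this bookkeeping for each admissible slope pattern, and for $t=3,4,5$ separately, produces the finite list of local pictures; I would then match that list against Figure~\ref{fig:types} and observe they coincide. Throughout, ``no slope repeated three times'' and ``$F_1\cup F_2\cup F_3$ convex'' reductions (exactly as used in Lemmas~\ref{lem:maxnumberofedges} and \ref{lem:norightangle}) are what kill the spurious configurations.

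The main obstacle I anticipate is not any single deep step but the combinatorial case analysis: one must be careful that the labelling from Lemma~\ref{lem:maxnumberofedges} together with the octagon's fixed direction set really does force the stated short list, and in particular that irreducibility is invoked in every place where two adjacent tiles could otherwise be merged (adjacent parallelograms, or a parallelogram-parallelogram-hexagon triple as in the proof of Lemma~\ref{lem:norightangle}). A secondary subtlety is the behaviour at the two endpoints $a,b$ of $E$: there the relevant angle is the $135^\circ$ interior angle of $P$, not a tile angle, so the edges $E_1$ and $E_s$ need slightly different treatment from the interior $E_i$'s, and one should check that this does not create extra subtypes beyond those drawn. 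Once these are handled, the proof is just the assertion that the enumerated configurations are exactly those in Figure~\ref{fig:types}.
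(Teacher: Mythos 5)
Your overall strategy is the same as the paper's (bound and order the crossing edges via the monotone-slope/at-most-two-per-direction argument, then identify the tiles between consecutive crossing edges using Lemma~\ref{lem:norightangle}, parallelism to the sides of $P$, and irreducibility), but as written there are two genuine gaps. First, you assert that the number of edges of $\F$ meeting $E$ but not contained in it is exactly $s=t+1$, i.e.\ one crossing edge per division point of $E$. This is not automatic: a vertex of $\F$ in the relative interior of $E$ may a priori emit two crossing edges, and for a type~$4$ side this possibility (six crossing edges) is precisely the case the paper has to exclude by a genuine argument -- Lemma~\ref{lem:norightangle} produces a third tile $F_3$ and then $F_1\cup F_2\cup F_3$ is convex, contradicting irreducibility; an analogous exclusion is needed for type~$3$ sides with five or six crossing edges. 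Your closing remark that ``$F_1\cup F_2\cup F_3$ convex'' reductions kill spurious configurations gestures at the right tool, but the claim $s=t+1$ assumes the conclusion of that analysis before it is made, so the enumeration of slope sequences is not justified as stated.

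Second, your list of possible tiles wedged between consecutive crossing edges (parallelogram or hexagon) omits octagon tiles, which are available since all four edge directions of $P$ may occur on one tile (Corollary~\ref{cor:edgesareparallel}). This matters because the direction pattern of the crossing edges does \emph{not} determine the local configuration: for a type~$3$ side whose two interior crossing edges lean inward (angles $3\pi/4$ at both interior vertices of $E$), the middle tile can be either a hexagon or an octagon, and these give two distinct subtypes in Figure~\ref{fig:types} (this is exactly the distinction the paper later exploits between subtypes 3a and 3b). With your shape list the bookkeeping would produce a shorter list than the figure, so the final step ``match the list against Figure~\ref{fig:types} and observe they coincide'' -- which in any case is asserted rather than carried out -- would fail. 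A minor additional slip: perpendicular pairs among the admissible directions are the $\pi/4$ and $3\pi/4$ directions (or a vertical crossing edge against $E$ itself), not the vertical against the $\pi/4$ direction, so the criterion you give for when Lemma~\ref{lem:norightangle} applies needs correcting.
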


\begin{proof}
Without loss of generality, let $E=E_1$.

\emph{Case 1}, $E$ is type $5$. Let the edges of $\F$ in $E$ be denoted by $E'_1, E'_2, \ldots, E'_5$ in counterclockwise order. Then, by Corollary~\ref{cor:atmosttwoedges}, there are exactly six edges in $\F$ intersecting $E$ and not contained in it. Let them be $E''_1, E''_2, \ldots, E''_6$ where the indices are chosen according to the order of their intersection points with $E$, in counterclockwise direction. Then $E''_1 \subseteq E_8$ and $E''_6 \subseteq E_2$, $E''_2$ is parallel to $E_8$, $E''_5$ is parallel to $E_2$, and $E''_3$ and $E''_4$ are parallel to $E_3$.

Note that as every other edge in $\F$ is disjoint from $E$, for each of the triples $\{ E''_i, E'_i, E''_{i+1} \}$, where $i=1,2,\ldots,5$, there is a face of $\F$ containing all edges in it. Since every edge in $\F$ is parallel to an edge of $P$, the faces containing $E'_1$, $E'_3$ and $E'_5$ are parallelograms, whereas the ones containing $E'_2$ and $E'_4$ are hexagons. This finishes the proof in this case.

\begin{figure}[ht]
\includegraphics[width=0.25\textwidth]{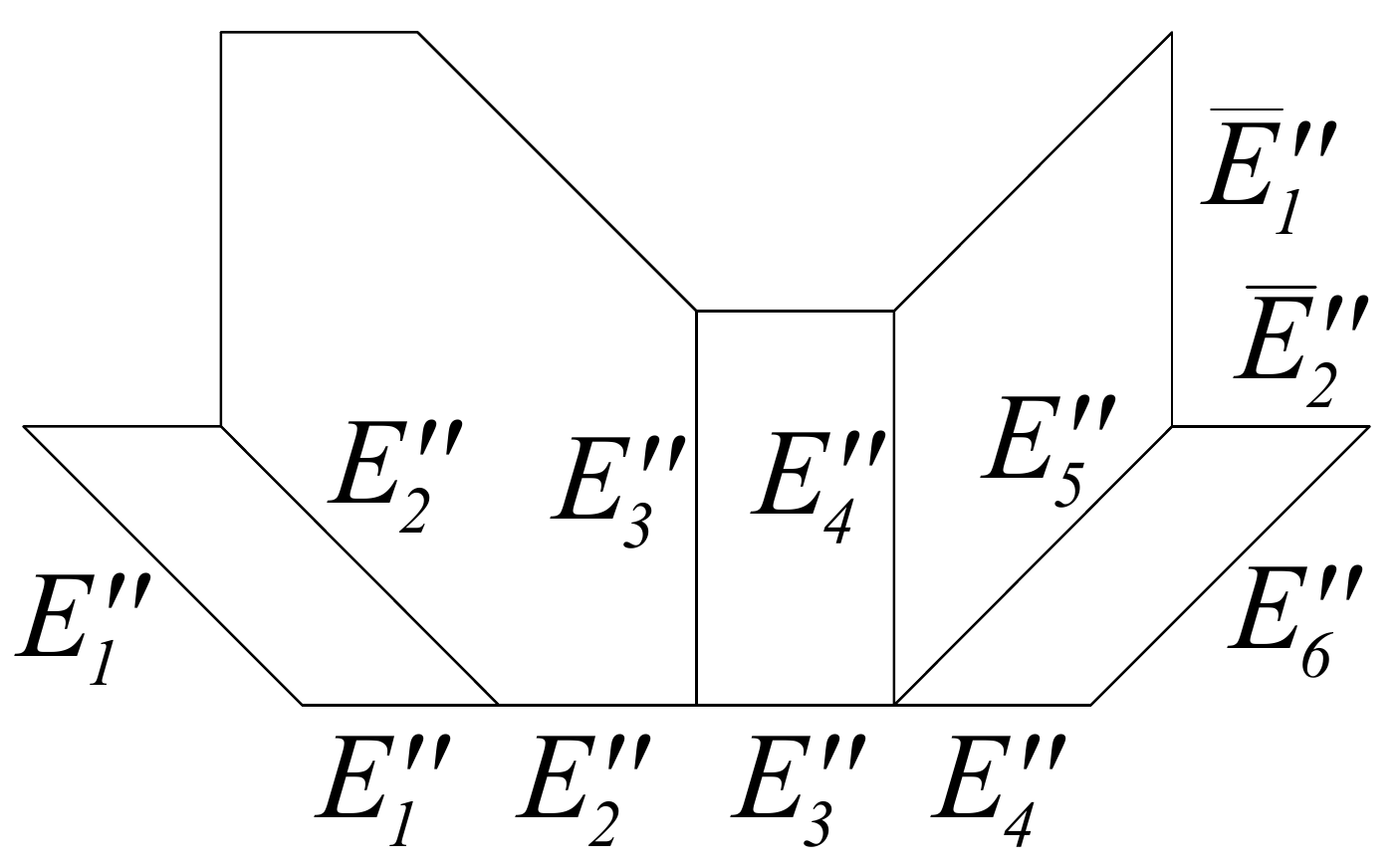}
\caption[]{An illustration for Case 2 in the proof of Lemma~\ref{lem:types}}
\label{fig:lemma3}
\end{figure}

\emph{Case 2}, $E$ is type $4$. Let the edges of $\F$ in $E$ be denoted by $E'_1, E'_2, E'_3, E'_4$, and the ones intersecting $E$ that are not contained in it, by $E''_1, \ldots, E''_u$, where, in labelling the edges we used the same convention as in the previous case. Note that $u=5$ or $u=6$, and in the latter case two edges meet on the edge $E$.

First, we consider the case that $u=6$. Then $E''_2$ is parallel to $E''_1 \subseteq E_8$, $E''_5$ is parallel to $E''_6 \subseteq E_2$, and $E''_3$ and $E''_4$ are parallel to $E_3$. Without loss of generality, we may assume that $E''_4$ and $E''_5$ meet at the common point of $E'_3$ and $E'_4$. Then, the tile containing $E''_4$ and $E''_5$ is a parallelogram $F_1$, and the tile containing $E'_4$ is a parallelogram $F_2$, as well. Let $\bar{E}_1$ denote the edge of $F_1$ parallel to $E''_4$ and, similarly, let $\bar{E}_2$ denote the edge of $F_2$ parallel to $E'_4$ (cf. Figure~\ref{fig:lemma3}). By Lemma~\ref{lem:norightangle}, there is a tile $F_3$ containing both $\bar{E}_1$ and $\bar{E}_2$, and $F_3$ is either a rectangle, or a hexagon. Nevertheless, in both cases $F_1 \cup F_2 \cup F_3$ is convex, which contradicts the condition that $\T$ is irreducible.

Now let $u=5$. Then we have one of the following.
\begin{enumerate}
\item[(i)] Only $E''_3$ is parallel to $E_3$.
\item[(ii)] Only $E''_1$ is parallel to $E_8$, or only $E''_5$ is parallel to $E_2$.
\end{enumerate}

In case of (i), applying an argument like in Case 1, the edges $E''_1, E''_2, E''_3$ and $E''_4$ belong to a parallelogram, a hexagon, another hexagon and a parallelogram, respectively, resulting in the configuration in (b) of Figure~\ref{fig:types}. Similarly, from (ii) we obtain a configuration combinatorially equivalent to the configuration in (c) of Figure~\ref{fig:types}.

\emph{Case 3}, $E$ is type 3. Then a similar argument yields the assertion.
\end{proof}

\begin{rem}\label{rem:typeneighbors}
The following table shows the possible types and/or subtypes of the two neighbors of a type 3/4 or 5 side. cf. The orientation of the side is the same as in Figure~\ref{fig:types}.

\begin{table}[h]
\begin{tabular}{c|c|c}
Type/Subtype & left-hand side & right-hand side\\
 of a side & neighbor & neighbor\\
\hline
5 & 2 or 3a & 2 or 3a\\
4a & 2 or 3a & 2 or 3a\\
4b & 1 or 2 & 2 or 3a\\
3a & 1, or 2 or 3 & 1, or 2 or 3\\
3b & 1, or 2, or 3a & 1, or 2 or 3a\\
3c & 1 & 2 or 3a\\
3d & 1 or 2 & 2 or 3a\\
3e & 1 or 2 & 1 or 2
\end{tabular}
\smallskip
\caption{The types/subtypes of the two neighbors of a type 3/4/5 side of $P$}
\end{table}
\end{rem}

To characterize the combinatorial classes of tilings, we distinguish the following cases; in each case the number in parentheses shows the number of combinatorial classes that belong to that case.

\begin{itemize}
\item[I)] $P$ has at least one type 5 edge (20 configurations).
\item[II)] $P$ has no type 5 edge, but it has some type 4 edges (25 configurations).
\item[III)] $P$ has only type 1 edges (1 configuration).
\item[IV)] $P$ has only type 2 edges (4 configurations).
\item[V)] $P$ has only type 3 edges (2 configurations).
\item[VI)] $P$ has some type 1 and type 2 edges, but no other types (13 configurations).
\item[VII)] $P$ has some type 1 and type 3 edges, but no other types (5 configurations).
\item[VIII)] $P$ has some type 2 and type 3 edges, but no other types (29 configurations).
\item[IX)] $P$ has some type 1,2 and 3 edges, but no other types (12 configurations).
\end{itemize}

We present the proof of Case VIII in detail, and only sketch the proofs of the other cases.
Due to the large number of cases, in the proof we need to rely on labels introduced only in figures. To make these labels well-defined, in each case we keep the following rules:
\begin{itemize}
\item $E_1$ is the bottom horizontal side of $P$;
\item apart from $E_1, \ldots, E_8$, only vertices, edges and faces of $\F$ are labelled;
\item vertices, edges and faces of $\F$ are labelled $p_i$, $E'_i$ and $F_i$, respectively, for some integer value of $i$;
\item labelled vertices are denoted by small circles, labelled edges by dotted lines, and labelled faces always contain their labels.
\end{itemize}

\begin{proof}[Proof of Case VIII]
We have four possibilities:
\begin{itemize}
\item $P$ has exactly one pair of type 3 edges,
\item $P$ has two consecutive pairs of type 3 edges,
\item $P$ has two nonconsecutive pairs of type 3 edges,
\item $P$ has three pairs of type 3 edges.
\end{itemize}

In other words, we may assume that $P$ is of type either 3/2/2/2, or 3/2/2/3, or 3/2/3/2, or 3/2/3/3.
Note that, by Remark~\ref{rem:typeneighbors}, $P$ has no subtype 3c side.

\emph{Subcase VIII.1}, $P$ is type 3/2/2/2.

Before proceeding further, we observe that an elementary consideration shows that if a side of subtype 3d or 3e has two type 2 neighbors, then the faces of $\F$ intersecting at least one of these three sides form a configuration combinatorially equivalent to those in (a) and (b) of Figure~\ref{fig:double3e}.

\begin{figure}[ht]
\includegraphics[width=\textwidth]{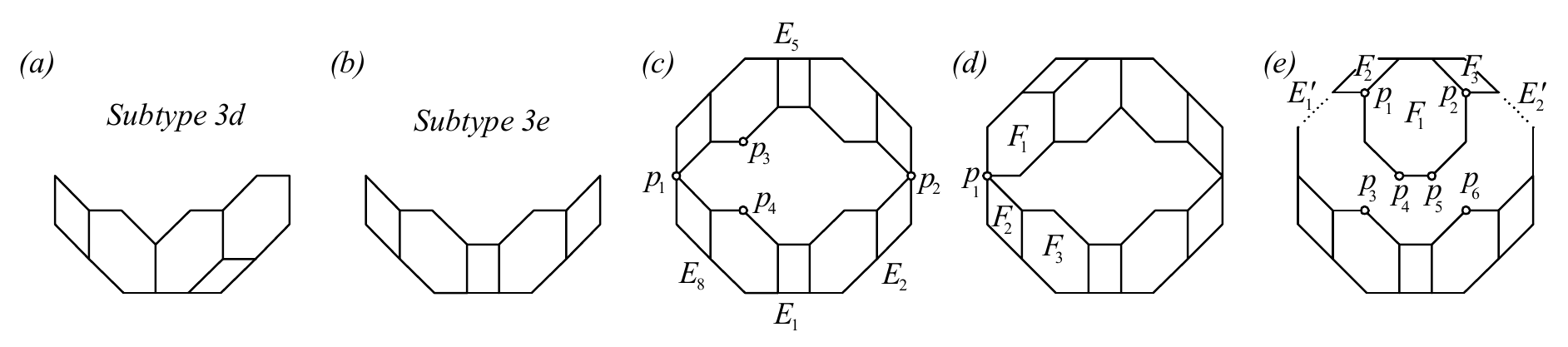}
\caption[]{Illustrations for the proof of Case VIII, part 1}
\label{fig:double3e}
\end{figure}

\emph{Subcase VIII.1.1}, $P$ has a subtype 3e side. Without loss of generality, we may assume that $E_1$ is subtype 3e.

Assume that $E_5$ is also subtype 3e. Then the faces intersecting any of the $E_i$s for $i=1,2,4,5,6,8,$ form a configuration congruent to the one in (c) of Figure~\ref{fig:double3e}. We use the notations of this figure. By Lemma~\ref{lem:norightangle}, besides the two parallelograms shown in (c), $p_1$ belongs to exactly one more face $F_1$ of $\F$, which is either a rectangle, or a hexagon. Consider the case that $F_1$ is a rectangle. Then, as every edge in $\F$ is parallel to a side of $P$, $F_1$ has common edges with two more parallelograms in $\F$, one containing $p_3$, and another one containing $p_4$. The union of $F_1$ with these two parallelograms is a convex hexagon, which contradicts our assumption that $\T$ is irreducible. Thus, $F_1$ is a hexagon. We may repeat the same argument for $p_2$, and then for $p_3$ and $p_4$, which yields the configuration VIII/1 in the Appendix.

Now consider the case that the other type 3 side of $P$ is subtype 3d. Part (d) of Figure~\ref{fig:double3e} shows the combinatorial structure of the faces of $\F$ intersecting some $E_i$ for $i=1,2,4,5,6,8$. Note that the point $p_1$ belongs to three faces: the hexagon $F_1$, the parallelogram $F_2$ and another parallelogram, not shown in the figure, which we denote by $F_4$. Then the union of $F_2$, $F_3$ and $F_4$ is a convex hexagon, which contradicts the irreducibility of $\T$.

Assume that the other type 3 side of $P$ is subtype 3b; we use the notations of (e) of Figure~\ref{fig:double3e}. Observe that the segment $[p_3,p_4]$
is a translate of $E'_1$, and let $L$ denote the line of $[p_3,p_4]$. Consider the class of $E'_1$ (cf. Lemma~\ref{lem:edgeclasses}), and let $S$ be the union of all faces of $\F$ that contain an edge from this class. Then every line parallel to $L$ that intersects $P$ intersects $S$ in a segment of length at least $|p_4-p_3|$.
Thus, $L \cap S = [p_3,p_4]$, from which, since $p_3$ and $p_4$ are vertices of $\F$, it follows that $[p_3,p_4]$ is an edge of $\F$.
We obtain similarly that $[p_5,p_6]$ is an edge of $\F$, implying, by the irreducibility of $\T$, that $[p_3,p_4]$ and $[p_5,p_6]$ are edges of the same hexagon tile of $\F$.
By Lemma~\ref{lem:norightangle}, besides $F_1$ and $F_2$, $p_1$ is the vertex of exactly one more tile, which is either a rectangle or a hexagon. It is easy to see that as $\T$ is irreducible, this tile cannot be a hexagon, which yields also that $E'_1$ belongs to a parallelogram tile. Repeating the same argument for $p_2$, we obtain the configuration VIII/2 in the Appendix.

\begin{figure}[ht]
\includegraphics[width=\textwidth]{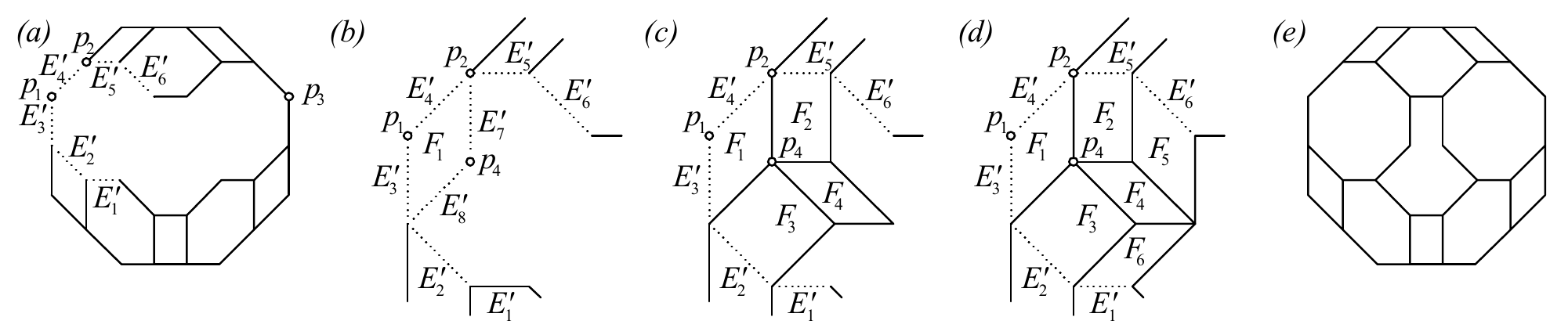}
\caption[]{Illustrations for the proof of Case VIII, part 2}
\label{fig:type3222_fig2}
\end{figure}

Finally, we are left with the case that the other type 3 edge of $\F$ is subtype 3a. We use the notations of (a) in Figure~\ref{fig:type3222_fig2}.
From the irreducibility of $\T$ it follows that the only edges of $\F$ starting at $p_1$ are $E'_3$ and $E'_4$. Thus, $E'_3$ and $E'_4$ belong to the same tile $F_1$. Consider the case that $F_1$ is a parallelogram. Let $E'_7$ and $E'_8$ be the other two sides of $F_1$ such that $E'_7$ is parallel to $E'_3$.
By Lemma~\ref{lem:norightangle}, $E'_5$ and $E'_7$, and $E'_2$ and $E'_8$ belong to the same tiles, which we denote by $F_2$ and $F_3$, respectively; furthermore, $F_2$ and $F_3$ are parallelograms or hexagons. Clearly, not both these tiles are hexagons. If both are parallelograms, then, besides $F_1$, $F_2$ and $F_3$, $p_4$ belongs to one more tile $F_4$, which is a parallelogram (cf. part (c) of Figure~\ref{fig:type3222_fig2}). Then both $E'_1$ and $E'_2$ belong to a parallelogram tile, let these tiles be $F_5$ and $F_6$ (cf. part (d) of Figure~\ref{fig:type3222_fig2}). Since $\bigcup_{i=1}^6 F_i$ is a convex octagon, we have reached a contradiction. If exactly one of $F_2$ and $F_3$ is a hexagon, we may apply a similar argument. Hence, we can assume that the tile $F_1$, containing $E'_3 \cup E'_4$ is not a parallelogram. A similar consideration shows that it is not a hexagon, either, which yields that it is an octagon . By symmetry, we have also that the point $p_3$ belongs to exactly one tile in $\F$, and this tile is an octagon (cf. (e) of Figure~\ref{fig:type3222_fig2}). These observations, and the irreducibility of $\T$ readily yields that in this case the configuration is VIII/3 in the Appendix.

\emph{Subcase VIII.1.2}, $P$ has a side of subtype 3d, but has no side of subtype 3e. Without loss of generality, we may assume that $E_1$ is subtype 3d.
Then the collection of the tiles intersecting $E_i$ for $i=1,2,3,4,8$ is congruent to (a) of Figure~\ref{fig:type3222_fig3}.

If the other type 3 side is subtype 3d, then the collection of the tiles containing an edge on any edge of $P$ is congruent to (b) of Figure~\ref{fig:type3222_fig3}. We use the notations of this figure. By Lemma~\ref{lem:norightangle}, both $P_1$ and $p_2$ belong to exactly three tiles of $\F$: two of them are parallelograms, shown in the figure, and the third ones are rectangles or hexagons. Furthermore, since $\T$ is irreducible, the third tiles are hexagons. This yields VIII/4 in the Appendix.

Consider the case that the other type 3 side is subtype 3b. We use the notations in (c) of Figure~\ref{fig:type3222_fig3}. By Lemma~\ref{lem:norightangle}, there is a rectangle or a hexagon tile that contains both edges $E'_1$ and $E'_2$. On the other hand, since $\T$ is irreducible, this tile is a hexagon. This implies that the sides $E'_3$, $E'_4$ and $E'_5$ belong to parallelograms, each of which has two sides parallel to $E_4$ (cf. (d) of Figure~\ref{fig:type3222_fig3}). Now, similarly like before, there is also a hexagon tile containing $p_1$ (cf. (e) of Figure~\ref{fig:type3222_fig3}), which readily yields the configuration VIII/5 in the Appendix.

\begin{figure}[ht]
\includegraphics[width=\textwidth]{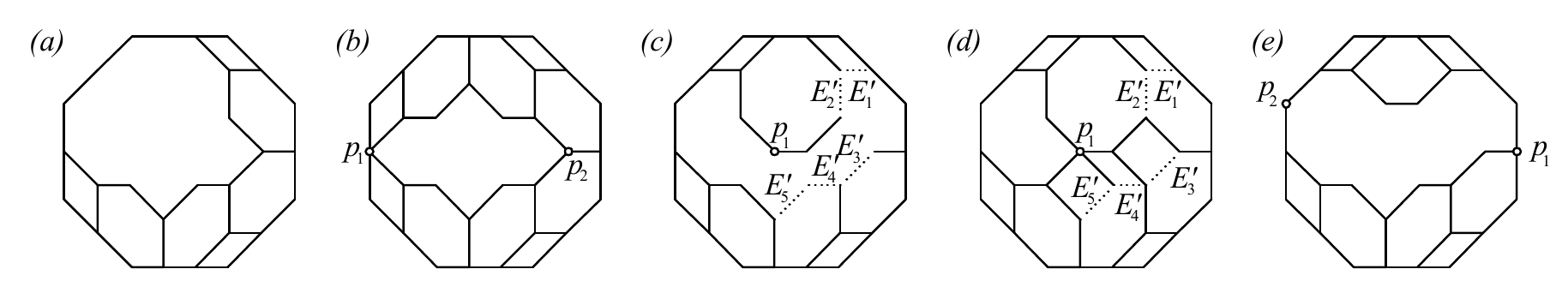}
\caption[]{Illustrations for the proof of Case VIII, part 3}
\label{fig:type3222_fig3}
\end{figure}

Assume that the other type 3 side is subtype 3a. We use the notations in (e) of Figure~\ref{fig:type3222_fig3}. By Lemma~\ref{lem:norightangle} and the irreducibility of $\T$ it follows that $p_1$ belongs to two hexagon tiles. Furthermore,  repeating the argument in the last paragraph of Subcase VIII.1.1, we have that $p_2$ belongs to an octagon tile. Thus, using the fact that every edge in $\F$ is parallel to an edge of $P$, we obtain that the only possible configuration in this case is VIII/6 in the Appendix.

\emph{Subcase VIII.1.3}, $P$ has no side of subtypes 3d or 3e.

Assume that $P$ has two subtype 3a sides. Then we may use the notations in (a) of Figure~\ref{fig:type3222_fig4}. Observe that the segment $[p_1,p_3]$ is vertical, and has unit length. Let $S$ denote the union of the tiles that have vertical sides. Then every vertical line that intersects $S$ intersects it in a segment of at least unit length. Thus, like in Subcase VIII.1.1, we have that $[p_1,p_3]$, and similarly $[p_2,p_4]$, is the union of two vertical edges of $\F$. Thus
$\conv\{ p_1,p_2,p_3,p_4 \}$ is the union of two tiles of $\F$, which contradicts the irreducibility of $\T$.

Consider the case now that $P$ has two subtype 3b sides. We use the notations in (b) of Figure~\ref{fig:type3222_fig4}. By Lemma~\ref{lem:norightangle}, besides the two octagons, each of $p_1$ and $p_2$ belongs to one more face of $\F$, and this face is a parallelogram or a hexagon. Let, for instance, $p_1$ belong to a parallelogram $F_1$. Observe that the distance of the midpoint $p_3$ of $E_7$, and the vertex $p_4$ of $F_1$ farthest from $p_1$, is equal to $\frac{1}{3}$. Let $S$ denote the union of the faces that contain edges from the class of the leftmost edge of $\F$ in $E_1$. Since every horizontal line intersects $S$ in a segment of length at least $\frac{1}{3}$, and $p_3$ and $p_4$ are vertices of $\F$, it follows that $[p_3,p_4]$ is an edge of $\F$. Then, by the irreducibility of $\T$, $[p_3,p_4]$ belongs to two hexagon tiles of $\F$. Similarly, if, for example, $p_2$ belongs to a hexagon $F_2$, then $F_2$ shares an edge with four more parallelogram tiles (cf. (c) of Figure~\ref{fig:type3222_fig4}). Since these two subconfigurations can occur independently, we obtain three more configurations: when both $p_1$ and $p_2$ belong to parallelograms, or when exactly one of them, or when neither. These configurations are shown as VIII/7, VIII/8 and VIII/9 in the Appendix, respectively.

We are left with the case that $P$ has one subtype 3a, and one subtype 3b side. Without loss of generality, we may assume that $E_1$ is subtype 3b, and $E_5$ is subtype 3a. We use the notations of (d) of Figure~\ref{fig:type3222_fig4}. First, similarly like in the last paragraph of Subcase VIII.1.1, we have that the segments $[p_1,p_3]$ and $[p_2,p_4]$ are edges of $\F$. Thus, the undivided part of $P$ consists of two congruent, unconnected regions, which we call \emph{left} and \emph{right} regions such that the left region contains $p_1$. Now, by Lemma~\ref{lem:norightangle}, besides an octagon and a parallelogram, each of $p_5$ and $p_6$ belongs to one additional tile, which we denote by $F_1$ and $F_2$, respectively, and which are rectangles or hexagons.

Consider the case that one of them, say $F_1$, is a rectangle, and let $p_7$ be the vertex of $F_1$ on $E_8$. Then $p_7$ belongs to three parallelogram tiles, and, by the irreducibility of $\T$, the remaining part of the left region is an octagon tile (cf. (e) in Figure~\ref{fig:type3222_fig4}). If one of $F_1$ and $F_2$, say $F_2$, is a hexagon, then let $p_8$ denote the midpont of $E_3$. By Lemma~\ref{lem:norightangle}, $p_8$ belongs to only one more tile $F_3$, and the irreducibility of $\T$ yields that $F_3$ is a hexagon. As every side of $\F$ is parallel to a side of $P$, the remaining part of the right region is divided into two parallelograms. If both $F_1$ and $F_2$ are hexagons, then the resulting configuration is not irreducible, and hence, we have two configurations in this case: in one both $F_1$ and $F_2$ are parallelograms, and in the other one exactly one of them is a hexagon. These configurations are shown as VIII/10 and VIII/11 in the Appendix, respectively.

\begin{figure}[ht]
\includegraphics[width=\textwidth]{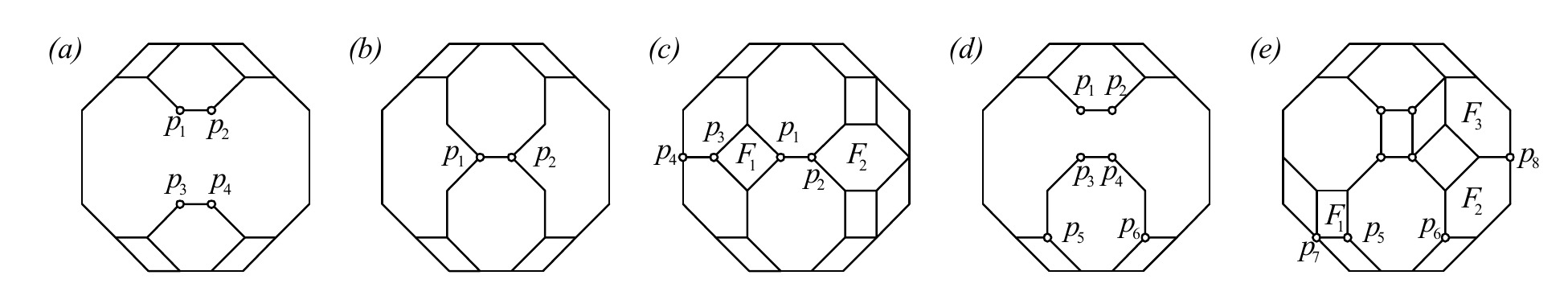}
\caption[]{Illustrations for the proof of Case VIII, part 4}
\label{fig:type3222_fig4}
\end{figure}

\emph{Subcase VIII.2}, $P$ is type 3/2/2/3.

Observe that by Remark~\ref{rem:typeneighbors}, for any two consecutive type 3 sides, at least one of them is subtype 3a, and also that $P$ has no subtype 3c and subtype 3e sides. Thus, without loss of generality, we may assume that $E_8$ is subtype 3a.

\emph{Subcase VIII.2.1}, $P$ has a subtype 3d side. Again, without loss of generality, we may assume that $E_1$ is subtype 3d. Then the tiles intersecting $E_i$ for $i=1,2,8$ are combinatorially equivalent to (a) in Figure~\ref{fig:type3223_fig1}. 

Consider the case that another side of $P$ is also subtype 3d. It is easy to see that the irreducibility of $\T$ yields that this side is $E_4$. Hence, $E_5$ is subtype 3a. For simplicity, we use the notations of part (b) of Figure~\ref{fig:type3223_fig1}. Since the sides of any tile of $\F$ are parallel to a side of $P$, $p_1$ belongs to two parallelogram tiles, one of which is contained in $\inter P$. Let $p_3$ denote the vertex of this parallelogram opposite to $p_1$. Clearly, the two sides of this parallelogram containing $p_3$ belong to two parallelogram tiles (cf. (c) of Figure~\ref{fig:type3223_fig1}). Repeating the argument in the last paragraph of VIII.1.1, we obtain that both $p_2$ and $p_3$ belong to an octagon tile; a contradiction.

Assume that $P$ also has a subtype 3b side. If this side is $E_4$, then the tiles intersecting $E_i$ for $i\neq 6,7$ form the configuration in (d) of Figure~\ref{fig:type3223_fig1}. Clearly, it cannot be completed to an irreducible configuration, which implies that $E_4$ is subtype 3a, and $E_5$ is subtype 3b.
Hence, we may use the notations in (e) of Figure~\ref{fig:type3223_fig1}. By Lemma~\ref{lem:norightangle}, we have that besides those shown in the figure, each of $p_1$ and $p_2$ belongs to one more tile, a parallelogram or a hexagon. By the irreducibility of $\T$, it follows that $p_1$ belongs to a hexagon, whereas $p_2$ belongs to a parallelogram. Then, using Lemma~\ref{lem:norightangle}, the irreducibility of $\T$, and the fact that every side in $\F$ is parallel to a side of $P$, an elementary consideration shows that in this case $\F$ is equivalent to VIII/12 in the Appendix.

\begin{figure}[ht]
\includegraphics[width=\textwidth]{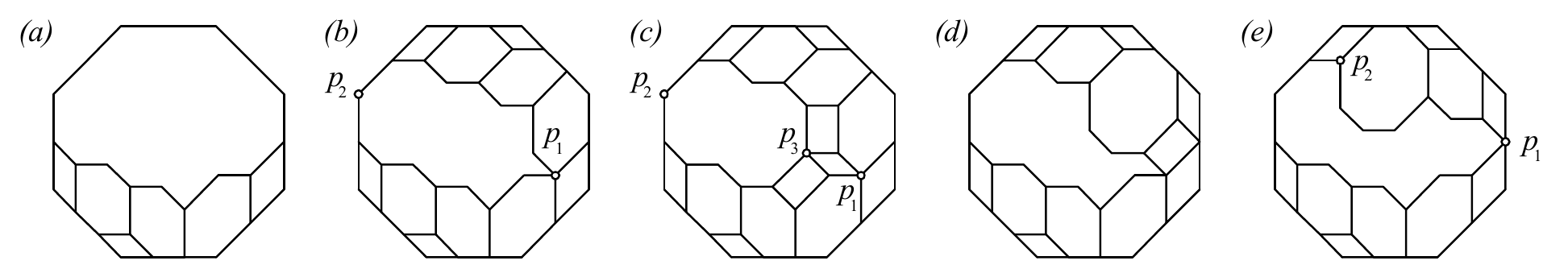}
\caption[]{Illustrations for the proof of Case VIII, part 5}
\label{fig:type3223_fig1}
\end{figure}

Assume now that $P$ has three subtype 3a sides. We use the notations of (a) of Figure~\ref{fig:type3223_fig2}. Observe that, besides those shown in the figure, $p_2$ belongs to one more tile of $\F$, which is a parallelogram. By Lemma~\ref{lem:norightangle}, besides those shown in the figure, $p_1$ belongs to one more tile $F_1$, which is a parallelogram or a hexagon. Furthermore, the irreducibility of $\T$ yields that $F_1$ is a hexagon (cf. (b) of Figure~\ref{fig:type3223_fig2}). Let the vertex of $F_1$, opposite to $p_1$, be denoted by $p_3$. Then the sides of $F_1$ containing $p_3$ belong to two parallelogram tiles, and the resulting configuration is not irreducible; a contradiction.

\emph{Subcase VIII.2.2}, All type 3 sides of $P$ are subtypes 3a or 3b.

\begin{figure}[ht]
\includegraphics[width=\textwidth]{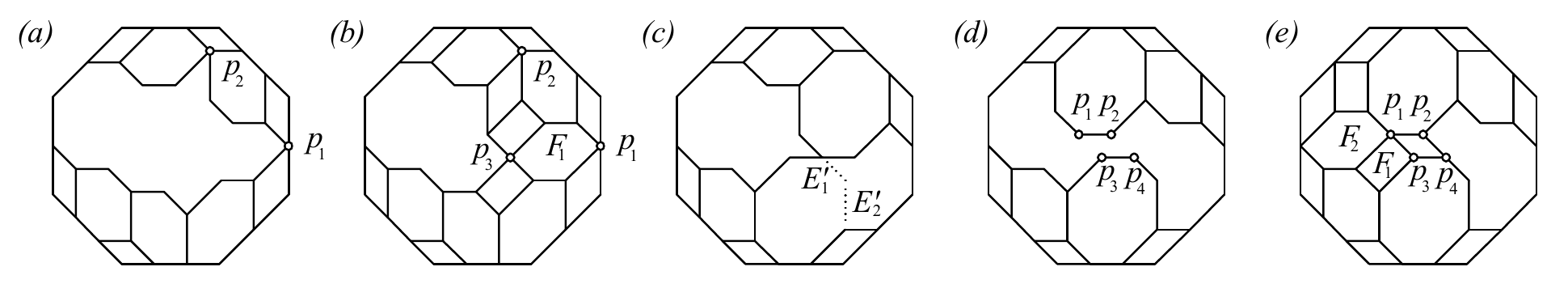}
\caption[]{Illustrations for the proof of Case VIII, part 6}
\label{fig:type3223_fig2}
\end{figure}

Let us assume that $P$ has two subtype 3b sides. First, we consider the case that these two sides are not opposite sides of $P$; let them be, say, $E_1$ and $E_4$. Then we may use the notations of (c) of Figure~\ref{fig:type3223_fig2}. Then, besides the octagon shown in the figure, both $E'_1$ and $E'_2$ belong to a parallelogram tiles of $\F$, which implies that $\T$ is not irreducible; a contradiction. Thus, let $E_1$ and $E_5$ be subtype 3b sides. We use the notations of (d) of Figure~\ref{fig:type3223_fig2}. Note that $| p_1 - p_3 | = \frac{1}{3}$, which yields, similarly like in the third paragraph of Subcase VIII.1.1, that $[p_1,p_3]$ is an edge of $\F$, belonging to the class of the middle edge in $E_8$. Thus, $F_0=\conv \{ p_1,p_2,p_3,p_4 \}$ is a tile in $\F$. Then the uncovered part of $P$ consists of two unconnected, congruent regions. We call the region intersecting $E_7$ \emph{left} region, and the other one \emph{right} region. Then, by Lemma~\ref{lem:norightangle}, $[p_1,p_3]$ belongs to $F_0$ and another parallelogram, which we denote by $F_1$ (cf. (e) of Figure~\ref{fig:type3223_fig2}). Similarly, Lemma~\ref{lem:norightangle} and the irreducibility of $\T$ implies that besides an octagon, $F$ and $F_1$, $p_1$ belongs to a hexagon tile, which we denote by $F_2$. The remainder of the left region is clearly divided into two parallelograms. By symmetry, the dissection of the right region is the reflected copy
of the dissection of the left region, about the center of $P$. This yields VIII/13 in the Appendix.

Next, consider the case that $P$ has exactly one subtype 3b side. Without loss of generality, we assume that $E_1$ is subtype 3b, and use the notations of (a) of Figure~\ref{fig:type3223_fig3}. Then $p_1, p_2$ and $p_3$ belong to a parallelogram tile. By Lemma~\ref{lem:norightangle} and since different tiles do not overlap, $p_3, p_4$ and $p_5$ belong to another parallelogram tile (cf. (b) of Figure~\ref{fig:type3223_fig3}). By the same lemma, the other tile $[p_3,p_5]$ belongs to is a rectangle or a hexagon. On the other hand, in both cases we reach a contradiction with our condition that $\T$ is irreducible.

Finally, assume that $P$ has four type 3a sides. Then the tiles intersecting $E_i$ for $i=1,4,5,8$ are shown in (c) of Figure~\ref{fig:type3223_fig3}. We use the notations of this figure. Similarly like in the last paragraph of Subcase VIII.1.1, we obtain that $p_1$ and $p_2$ belong to octagon tiles (cf. (d) of Figure~\ref{fig:type3223_fig3}). Then, as the sides of every tile are parallel to some sides of $P$, we readily obtain the configuration in VIII/14 in the Appendix.

\begin{figure}[ht]
\includegraphics[width=\textwidth]{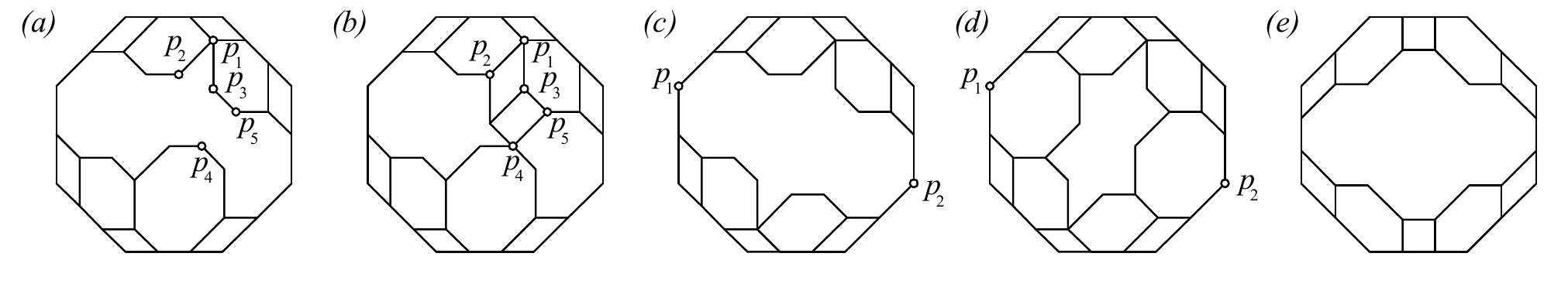}
\caption[]{Illustrations for the proof of Case VIII, part 7}
\label{fig:type3223_fig3}
\end{figure}

\emph{Subcase VIII.3}, $P$ is type 3/2/3/2.

\emph{Subcase VIII.3.1}, $P$ has a subtype 3e side.

Without loss of generality, let $E_1$ be subtype 3e. Observe that in this case $E_3$ and $E_7$ are not subtype 3e. First, we consider the case that $E_5$ is subtype 3e as well (cf. (e) of Figure~\ref{fig:type3223_fig3}). Then the irreducibility of $\T$ implies that $E_3$ and $E_7$ are subtype 3b, and we readily obtain the configuration VIII/15 in the Appendix. Now, let $E_5$ be subtype 3d. By the symmetry of the conditions, we may assume that the common endpoint of $E_5$ and $E_6$ belongs to a hexagon tile. Then, by the irreducibility of $\T$, it follows that $E_7$ is subtype 3b, and $E_5$ is subtype 3d (cf. (a) of Figure~\ref{fig:type3232_fig1}). Applying Lemma~\ref{lem:norightangle} and the fact that $\T$ is irreducible, we obtain VIII/16 in the Appendix.

Let $E_5$ be subtype 3b. Then the tiles intersecting $E_i$ for $i=1,2,5,8$ are congruent to those in (b) of Figure~\ref{fig:type3232_fig1}. We use the notations of this figure. By Lemma~\ref{lem:norightangle}, besides those shown in this figure, each of $p_1$ and $p_2$ belongs to one more tile, which is a parallelogram or a hexagon. If both are hexagons, then the irreducibility of $\T$ implies that both $E_3$ and $E_7$ are subtype 3d (cf. (c) in Figure~\ref{fig:type3232_fig1}), and we obtain the configuration VIII/17 in the Appendix. If both are parallelograms, then we obtain similarly that both $E_3$ and $E_7$ are subtype 3b (cf. (d) in Figure~\ref{fig:type3232_fig1}), and the configuration is equivalent to VIII/18 in the Appendix. Finally, let exactly one of them, say the one containing $p_1$, be a parallelogram, and the other one, containing $p_2$, be a hexagon. Then  $E_3$ is subtype 3d, and $E_7$ is subtype 3b, and the irreducibility of $\T$ implies that the configuration is equivalent to VIII/19 in the Appendix.

\begin{figure}[ht]
\includegraphics[width=\textwidth]{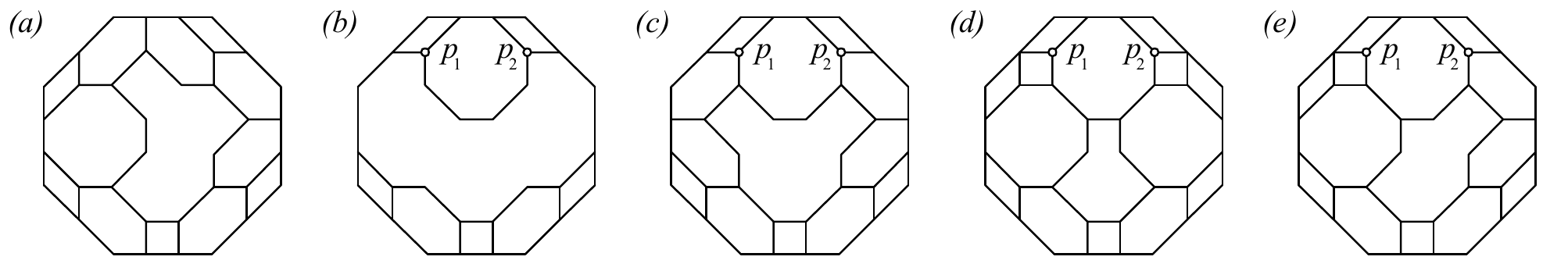}
\caption[]{Illustrations for the proof of Case VIII, part 8}
\label{fig:type3232_fig1}
\end{figure}

\emph{Subcase VIII.3.2}, $P$ has no subtype 3e side.

First, consider the case, that there is a subtype 3d side of $P$, say $E_1$. Without loss of generality, we may assume that $E_1$ is subtype 3d, and the tiles intersecting $E_i$ for $i=1,2,8$ form a configuration equivalent to the one in (a) of Figure~\ref{fig:type3232_fig2}. Since $P$ has no subtype 3e side, $E_3$ is subtype 3d as well, which yields that $E_5$ and $E_7$ are also subtype 3d. Using Lemma~\ref{lem:norightangle}, it follows that the configuration is equivalent to VIII/20 in the Appendix.

From now on, we may assume that every subtype 3 side of $P$ is subtype 3a or subtype 3b. Consider first the case that $P$ has a subtype 3b side. Without loss of generality, we may assume that $E_1$ is subtype 3b. By the irreducibility of $\T$, we obtain that then both $E_3$ and $E_5$ are subtype 3b, which, applying this observation also for $E_3$, yields that every type 3 side of $P$ is subtype 3b (cf. (c) of Figure~\ref{fig:type3232_fig2}). Thus, $\F$ is equivalent to VIII/21.
We are left with the case that every type 3 side of $P$ is subtype 3a (cf. (d) of Figure~\ref{fig:type3232_fig2}). Then, by Lemma~\ref{lem:norightangle} and the irreducibility of $\T$, the midpoint of every $E_i$, where $i=2,4,6,8$, belongs to two parallelograms and a hexagon (cf. (e) of Figure~\ref{fig:type3232_fig2}).
Note that the remaining part of $P$ cannot be dissected into an irreducible tiling; hence, there is no suitable configuration in this case.

\begin{figure}[ht]
\includegraphics[width=\textwidth]{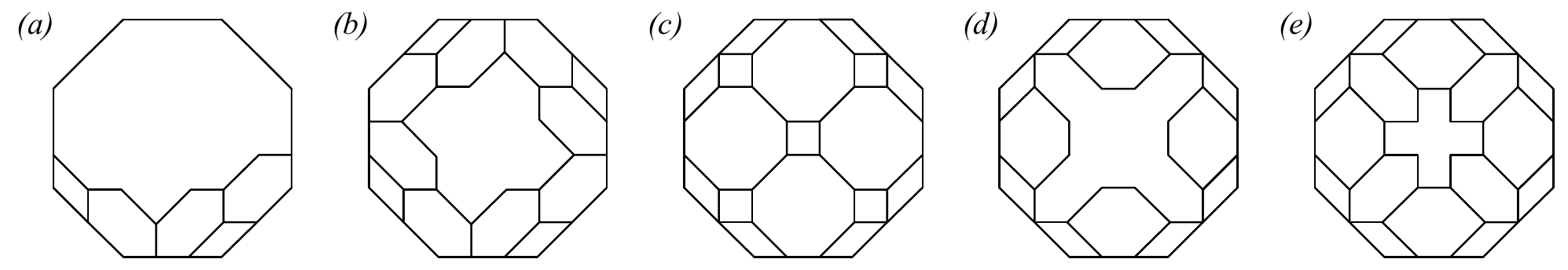}
\caption[]{Illustrations for the proof of Case VIII, part 9}
\label{fig:type3232_fig2}
\end{figure}

\emph{Subcase VIII.4}, $P$ is type 3/3/2/3.
First, note the following consequences of Remark~\ref{rem:typeneighbors}.
\begin{itemize}
\item since every type 3 side of $P$ has a type 3 side neighbor, $P$ has no subtype 3e side;
\item consequently, every side of $P$ is subtype 3a, 3b or 3d.
\item if two type 3 sides of $P$ are consecutive, then at least one of them is subtype 3a;
\item If $E_1$ or $E_5$ is not subtype 3a, then it is subtype 3b, and its two neighbors are subtype 3a.
\end{itemize} 

\emph{Subcase VIII.4.1}, if $E_1$ or $E_5$ is subtype 3b.

First, assume that both $E_1$ and $E_5$ are subtype 3b. Then the tiles intersecting $E_i$ for $i \neq 3,7$ are equivalent to the configuration in (a) of Figure~\ref{fig:type3323_fig1}. By Lemma~\ref{lem:norightangle} and the irreducibility of $\T$, it immediately follows that in this case $\F$ is equivalent to VIII/22 in the Appendix.

Now, consider the case that either $E_1$ or $E_5$, say $E_1$ is subtype 3b. Then $E_8$, $E_2$ and $E_5$ are subtype 3a (cf. (b) of Figure~\ref{fig:type3323_fig1}). Note that $E_4$ and $E_6$ are subtype 3a, 3b or 3d. Assume that at least one of them, say $E_4$, is subtype 3a (cf. (c) of Figure~\ref{fig:type3323_fig1}). We use the notations in this figure. By Lemma~\ref{lem:norightangle}, besides those shown in the figure, $p_1$ belongs to one more tile $F_1$, which, as $\T$ is irreducible, is a hexagon. The two sides of $\F_1$ of shown in (c) of Figure~\ref{fig:type3323_fig1} belong also to two rectangles. Since, clearly, $p_2$ belongs to two parallelogram tiles (cf. (d) of Figure~\ref{fig:type3323_fig1}), the configuration is not irreducible; a contradiction. We may apply a similar argument if at least one of $E_4$ and $E_6$ is subtype 3b. Thus, it suffices to consider the case that both $E_4$ and $E_6$ are subtype 3d; the tiles intersecting any side of $P$ are shown in (e) of Figure~\ref{fig:type3323_fig1}. Then the irreducibility of $\T$ immediately implies that $\F$ is equivalent to VIII/23 in the Appendix.

\begin{figure}[ht]
\includegraphics[width=\textwidth]{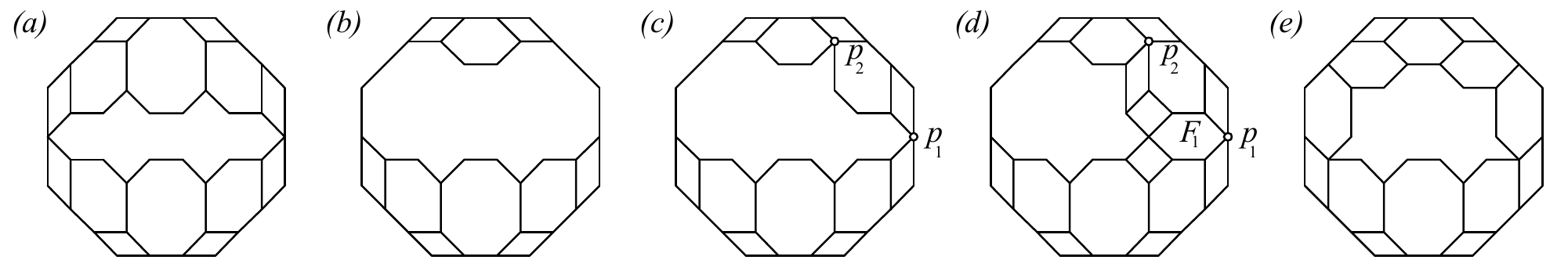}
\caption[]{Illustrations for the proof of Case VIII, part 10}
\label{fig:type3323_fig1}
\end{figure}

\emph{Subcase VIII.4.2}, both $E_1$ and $E_5$ are subtype 3a. Then the neighbors of $E_1$ and $E_5$ are subtype 3a, 3b or 3d.
First, we show that none of them are subtype 3a.

Assume that, say, $E_8$ is subtype 3a. If $E_3$ is subtype 3d, then the tiles intersecting $E_1$ for $i=1,5,6,7,8$ are equivalent to those in (a) of Figure~\ref{fig:type3323_fig2}, and we may use the notations in this figure. Note that, as the sides of every tile are parallel to some sides of $P$, besides those shown in (a), each of $p_1$ and $p_2$ belongs to one more tile, which is a parallelogram in both cases. Clearly, there is a rectangle tile adjacent to both these parallelograms (cf. (b) of Figure~\ref{fig:type3323_fig2}), and the resulting configuration is not irreducible; a contradiction. If $E_3$ is subtype 3b or 3a, we may apply a similar argument. Thus, we have shown that all neighbors of $E_1$ and $E_5$ are subtype 3b or 3d.

Note that the previous consideration shows more: we obtained that if, for any of the pairs $\{E_2,E_4 \}$ and $\{ E_6,E_8\}$, an element of the pair is subtype 3d, then the other one is subtype 3b. Hence, in particular, among the sides of $P$, there are at most two subtype 3d sides.

Consider the case that $P$ has exactly two subtype 3d sides. Then, without loss of generality, let $E_8$ be subtype 3d, which yields that $E_6$ is subtype 3b. If the other subtype 3d side is $E_2$, then the tiles intersecting a side of $P$ are equivalent to those in (c) of Figure~\ref{fig:type3323_fig2}, and the irreducibility of $\T$ implies that $\F$ is equivalent to VIII/24 in the Appendix. If the other subtype 3d side is $E_4$, then 
the tiles intersecting a side of $P$ are equivalent to those in (d) of Figure~\ref{fig:type3323_fig2}. In this case a short case analysis shows that $\F$ is equivalent to either VIII/25 or VIII/26 in the Appendix.

Assume that $P$ has exactly one subtype 3d side. Without loss of generality, let this side be $E_8$. Then the tiles intersecting a side of $P$ are equivalent to those in (e) of Figure~\ref{fig:type3323_fig2}. Thus, Lemma~\ref{lem:norightangle} and the irreducibility of $\T$ yields that $\F$ is equivalent to VIII/27 or VIII/28 in the Appendix. If $P$ has no subtype 3d side, then, clearly, $\F$ is equivalent to VIII/29 in the Appendix.

\begin{figure}[ht]
\includegraphics[width=\textwidth]{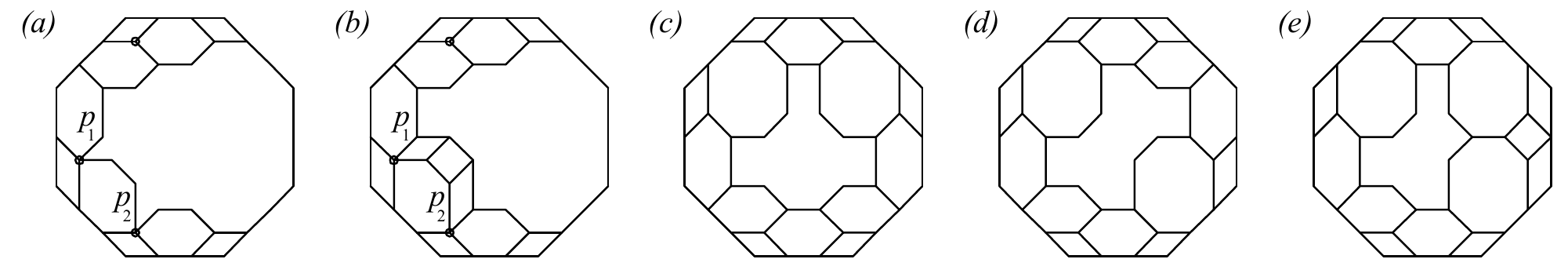}
\caption[]{Illustrations for the proof of Case VIII, part 11}
\label{fig:type3323_fig2}
\end{figure}
\end{proof}

\begin{proof}[Sketch of the proof of the other cases]

First, we deal with Case I. Without loss of generality, we may assume that $E_1$ is type 5. Applying Remark~\ref{rem:typeneighbors}, it follows that $P$ has at most two pairs of type 5 sides. If $P$ has exactly two pairs of type 5 sides, then, by the irreducibility of $\T$, the other two pairs are type 3, and the tiles intersecting any side of $P$ form a configuration equivalent to the one in (a) of Figure~\ref{fig:case1}. A case analyis similar to that in the proof of Case VIII yields four possible configurations in this case.

Assume that the only type 5 sides of $P$ are $E_1$ and $E_5$. Then the neighbors of these sides are type 2 or 3. Let $E_3$ and $E_7$ be type 4. Then, by Lemma~\ref{lem:norightangle} and the irreducibility of $\T$ it follows that either both $E_3$ and $E_7$ are subtype 4a, which yields that $P$ is of type 5/3/4/3,
or both $E_3$ and $E_7$ are subtype 4b, and then $P$ is type 5/2/4/3 or (equivalently) type 5/3/4/2. In these cases, the tiles intersecting a side of $P$ are shown in (b) and (c) of Figure~\ref{fig:case1}. We obtain 8 and 1 possible configurations in the two cases, respectively.

\begin{figure}[ht]
\includegraphics[width=0.55\textwidth]{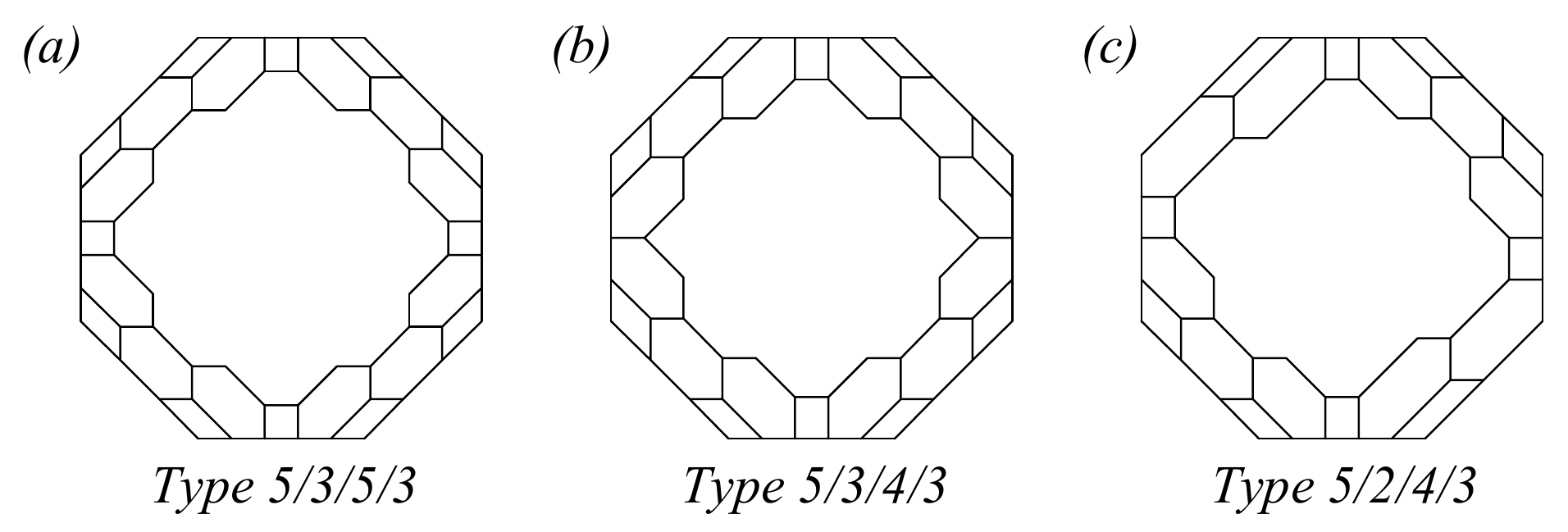}
\caption[]{Illustrations for the proof of Case I}
\label{fig:case1}
\end{figure}

If $E_3$ and $E_7$ are type 3 sides, then, without loss of generality, we may assume that the type of $P$ is either 5/3/3/3, or 5/2/3/3, or 5/2/3/2. Then, applying Remark~\ref{rem:typeneighbors}, we have that the subtypes of $E_3$ and $E_7$ are 3b in the first, 3d in the second, and 3e in the third case.
A careful examination in these cases yields 3,1 and 1 possible configurations, respectively. We obtain two more configurations under the condition that $E_3$ and $E_7$ are type 2. Finally, since the fact that $E_1$ and $E_5$ are type 5 yields that there are at least two classes of vertical sides in $\F$, it follows that $E_3$ and $E_7$ are not type 1.

In Case II, the proof is based on a similar classification scheme, using also the subtypes of the type 4 sides of $P$.

In Cases V, VII and IX, we distinguish cases based on the possible types of the sides of $P$, and also the subtypes of the type 3 sides.
Finally, if all sides of $P$ are type 1 or 2 (i.e. in Cases III, IV and VI), we rely on simple geometric observations, and use tools like in the proof of Case VIII.
\end{proof}

\section{Remarks and questions}\label{sec:remarks}

The problem of enumerating the irreducible, edge-to-edge decompositions of a $(2k)$-gon leads to the following question.

\begin{ques}
Can the irreducible, edge-to-edge decompositions of a $(2k+2)$-gon be generated from the irreducible, edge-to-edge decompositions of a $(2k)$-gon?
\end{ques}

We remark that the obvious way: removing or adding a class of edges in the sense of Lemma~\ref{lem:edgeclasses} does not preserve irreducibility in either direction.

\begin{defn}
For any $k \geq 2$, let $a_k$ denote the number of combinatorial classes of the irreducible, edge-to-edge decompositions of a centrally symmetric convex $(2k)$-gon into centrally symmetric convex pieces.
\end{defn}

Clearly, $a_2 =0$. From \cite{GHA97}, it follows that $a_3=6$. Our result states that $a_4=111$.

\begin{rem}
By Stirling's formula, the estimate in Theorem~\ref{thm:upperbound} is asympotically equal to $\frac{8}{27} \cdot \sqrt{\frac{6}{\pi}} \cdot \frac{1}{N^{\frac{3}{2}}} \cdot \left( \frac{256}{27} \right)^N$, where
$N = \lfloor \frac{2k^3(2k-3)^2}{\left( 2-\sqrt{2} \right) \pi^2 } \rfloor$.
\end{rem}

\begin{ques}
What is the correct magnitude of $a_k$, as a function of $k$? In particular, is $a_k$ bounded from below by a quantity exponential in $k$?
\end{ques}

The rapid increase in the value of $a_k$ for the initial values of $k$ might be an indication that the answer to our last question is positive.

\textbf{Acknowledgments}. The authors gratefully acknowledge the help of M. L\'angi in preparing the figures, and thank an unknown referee for his/her advice on how to improve the bound in Theorem~\ref{thm:upperbound}.

\section{Appendix}

The following tables show representatives of the $111$ combinatorial classes of irreducible, edge-to-edge decompositions of a centrally symmetric octagon. 
The type of a configuration is $a/b/c/d$, if the number of the edges of the partition on four edges of the octagon in counterclockwise direction, starting with the bottom horizontal edge is $a,b,c$ and $d$, respectively. Note that opposite edges of the octagon consist of the same number of edges of the decomposition.
The symbols $\#4=A, \#6=B, \#8=C$ mean that the corresponding configuration consists of $A$ parallelogram, $B$ hexagon, and $C$ octagon tiles.

\begin{figure}[ht]
\includegraphics[width=\textwidth]{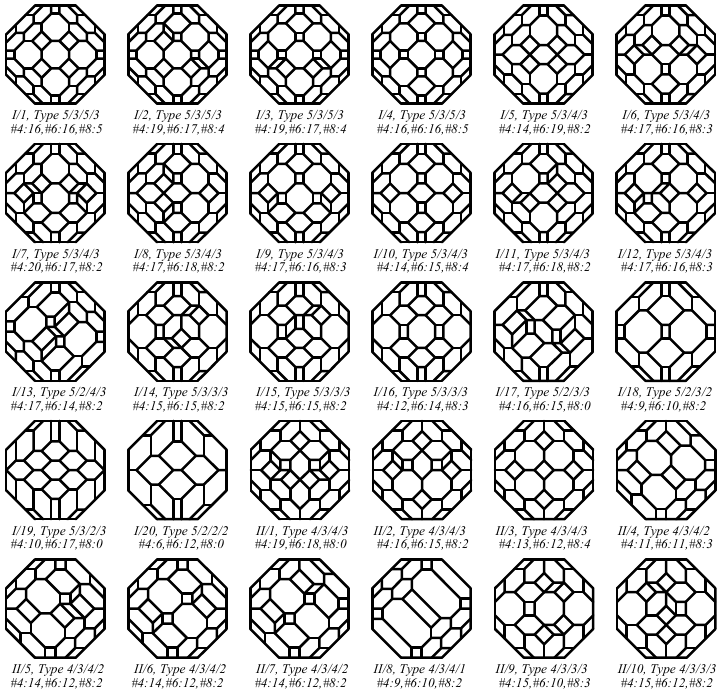}
%\caption[]{Illustrations for the proof of Case VIII, part 11}
%\label{fig:type3323_fig2}
\end{figure}

\pagebreak

\begin{figure}[ht]
\includegraphics[width=\textwidth]{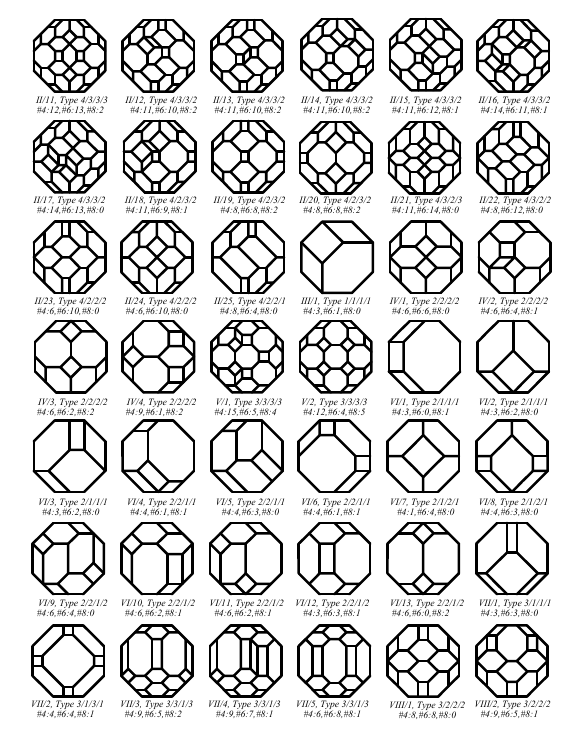}
%\caption[]{Illustrations for the proof of Case VIII, part 11}
%\label{fig:type3323_fig2}
\end{figure}

\pagebreak

\begin{figure}[ht!]
\includegraphics[width=\textwidth]{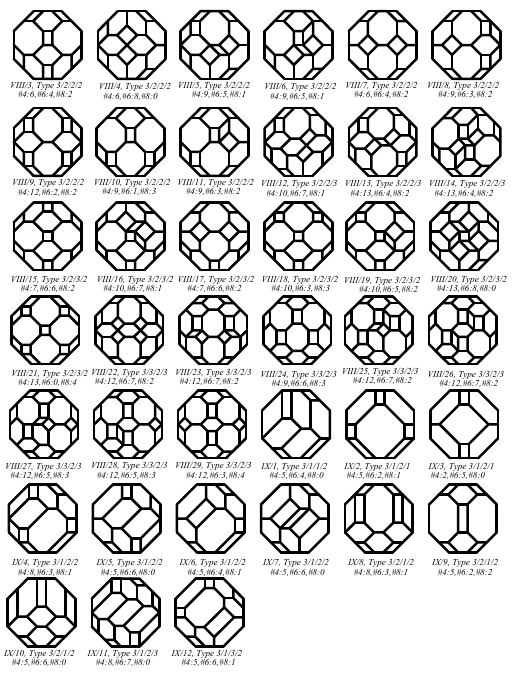}
%\caption[]{Illustrations for the proof of Case VIII, part 11}
%\label{fig:type3323_fig2}
\end{figure}

\end{document}